\documentclass[11pt]{article}
\usepackage{latexsym}
\usepackage{epsfig,enumerate}
\usepackage{amsmath,amsthm,amssymb, bbm}
\setlength{\textwidth}{6.5in} \setlength{\evensidemargin}{0.0in}
\setlength{\oddsidemargin}{0.0in} \setlength{\textheight}{9.0in}
\setlength{\topmargin}{-0.5in} \setlength{\parskip}{1.3mm}
\setlength{\baselineskip}{1.7\baselineskip}

\usepackage[usenames]{color}\usepackage{graphicx}
\numberwithin{equation}{section}
\definecolor{brown}{cmyk}{0, 0.72, 1, 0.45}
\definecolor{grey}{gray}{0.5}


\renewcommand{\epsilon}{\varepsilon}

\newcounter{rot}


\def\a{\alpha}  \def\d{\delta} \def\D{\Delta}
\def\e{\epsilon} \def\f{\phi}   \def\g{\gamma}
\def\G{\Gamma}  \def\k{\kappa} 
\def\z{\zeta}     
   
  \def\s{\sigma} 
\def\t{\tau}


\newtheorem*{conjecture*}{Conjecture}
\newtheorem{theorem}{Theorem}[section]
\newtheorem*{theorem*}{Theorem}
\newtheorem{lemma}[theorem]{Lemma}


\def\es{\emptyset}

\def\E{\mathbb{E}}

\def\P{\mathbb{P}}
\def\Pr{\mathbb{P}}

\newcommand{\bfo}{\mathbbm{1}}
\newcommand{\scr}{\mathcal}

\newcommand{\of}[1]{\left( #1 \right) }

\newcommand{\abs}[1]{\left| #1 \right|}

\newcommand{\sqbs}[1]{\left[ #1 \right]}
\newcommand{\braces}[1]{\left\{ #1 \right\}}
\newcommand{\Mean}[1]{\E\sqbs{#1}}

\renewcommand{\Pr}[1]{\mathbb{P}\left[ #1 \right]}

\newcommand{\reas}[1]{\textrm{re-assign}(#1)}

\allowdisplaybreaks[1]
\newcommand{\ignore}[1]{}

\newcommand{\beq}[1]{\begin{equation}\label{#1}}
\newcommand{\eeq}{\end{equation}}

\newcommand{\Bin}{\operatorname{Bin}}


\def\VG{V_\G}

\title{A greedy algorithm for finding a large 2-matching on a random cubic graph}

\author{Deepak Bal\and Patrick Bennett\and Tom Bohman\thanks{Research supported in part by NSF Grant DMS1001638} \and Alan Frieze\thanks{Research supported in
part by NSF Grant CCF1013110}\\Department of Mathematical Sciences\\Carnegie Mellon University\\Pittsburgh PA15213\\USA}

\begin{document}
\maketitle
\begin{abstract}
A 2-matching of a graph $G$ is a spanning subgraph with maximum degree two. The size of a 2-matching $U$ is the number of edges in $U$ and this is at least $n-\k(U)$ where $n$ is the number of vertices of $G$ and $\k$ denotes the number of components. In this paper, we analyze the performance of a greedy algorithm \textsc{2greedy} for finding a large 2-matching on a random 3-regular graph. We prove that with high probability, the algorithm outputs a 2-matching $U$ with $\k(U) = \tilde{\Theta}\of{n^{1/5}}$.
\end{abstract}

\section{Introduction}
In this paper we analyze the performance of a generalization of the well-known Karp-Sipser algorithm \cite{KS,FRS,AFP,BF} for finding a large matching
in a sparse random graph. A 2-matching $U$ of a graph $G$ is a spanning subgraph with maximum degree two.  Our aim is to show that w.h.p. our algorithm finds a large 2-matching in a random cubic graph. 
 The algorithm \textsc{2greedy} is described below and has been partially
analyzed on the random graph $G_{n,cn}^{\d\geq 3},\,c\geq 10$ in Frieze \cite{F11}. The random graph $G_{n,m}^{\d\geq3}$ is chosen uniformly at random from the collection of all graphs that have $n$ vertices, $m$ edges and minimum degree $\d(G)\geq 3$.  In \cite{F11}, the 2-matching output by the algorithm is used to find a  Hamilton cycle in $O(n^{1.5 + o(1)})$ time w.h.p.  Previously, the best known result  for this model was that $G_{n,cn}^{\d\ge 3}$ is Hamiltonian for $c \ge 64$ due to Bollob\'{a}s, Cooper, Fenner and Frieze \cite{BCFF}. It is conjectured that $G_{n,cn}^{\d\ge3}$ is Hamiltonian w.h.p. for all $c\ge 3/2$.

The existence of Hamilton cycles in other random graph models with $O(n)$ edges has also been the subject of much research. In such graphs, the requirement $\d \ge 3$ is necessary to avoid three vertices of degree two sharing a common neighbor. This obvious obstruction occurs with positive probability in many models with $O(n)$ edges and $\d=2$.
$G_{\textrm{3-out}}$ is a random graph where each vertex chooses 3 neighbors uniformly at random. This graph has minimum degree 3 and average degree 6. Bohman and Frieze proved that $G_{\textrm{3-out}}$ is Hamiltonian w.h.p. also by building a large 2-matching into a Hamilton cycle \cite{BF11}.
 Robinson and Wormald proved that $r$-regular graphs with $r\ge 3$ are Hamiltonian w.h.p. using an intricate second moment approach \cite{RW92},\cite{RW94}. Before this result, Frieze proved Hamiltonicity of $r$-regular graphs w.h.p. for $r\ge 85$ using an algorithmic approach \cite{F88}. An algorithmic proof of Hamiltonicity for $r\ge 3$ was given in \cite{FJMRW}.

In addition to the Hamiltonicity of $ G_{n,cn}^{\delta \ge 3}$ for $ 3/2 < c < 10 $, the Hamiltonicity of random graphs with $ O(n) $ edges and a 
 fixed degree sequence is a widely open question.  One natural example is the Hamiltonicity of a graph chosen uniformly at random from all the collection of all graphs with $ n/2 $ vertices of degree 3 and $n/2$ vertices of degree 4 (this particular question was posed by Wormald).  For both $ G_{n,cn}^{\delta \ge 3} $ and graphs with a fixed degree sequence one might hope to prove Hamiltonicity by first using \textsc{2greedy} to produce a large 2-matching and then using an extension rotation argument to convert this 2-matching into a Hamilton cycle.  In this paper we provide evidence that the first half of this broad program is feasible by showing that \textsc{2greedy} finds a very large 2-matching for the sparsest of the models with minimum degree 3, the random cubic graph itself.

The size of a 2-matching $U$ is the number of edges in $U$ and this is at least $n-\k(U)$ where $\k$ denotes the number of components. It was shown in 
\cite{FRS} that w.h.p. the Karp-Sipser algorithm only leaves $\tilde{\Theta}(n^{1/5})$ vertices unmatched. Here we prove the corresponding result 
for \textsc{2greedy} on a random cubic graph.

\begin{theorem}\label{thm:main}
Algorithm \textsc{2greedy} run on a random 3-regular graph with $n$ vertices outputs a 2-matching $U$ with $\k(U)=\tilde{\Theta}(n^{1/5})$, w.h.p.
\end{theorem}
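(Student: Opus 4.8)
The plan is to run \textsc{2greedy} in the pairing (configuration) model for random cubic graphs and to transfer the conclusion to the uniform model by contiguity. I would generate the pairs of the configuration online, so that conditioned on the algorithm's history the unexposed part of the configuration is a uniformly random pairing of the unexposed points. At each stage classify every vertex $v$ by $(\deg_U(v),r(v))$, where $\deg_U(v)\in\{0,1\}$ is its degree in the current partial $2$-matching $U$ and $r(v)$ counts the incident pairs that are still \emph{live} (neither placed in $U$ nor discarded); once $\deg_U(v)$ reaches $2$ or $r(v)$ reaches $0$ the vertex is frozen and its remaining live pairs are discarded, decrementing $r$ at their other endpoints. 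Let $Y_{a,b}(t)$ count the vertices of type $(a,b)$ after $t$ steps and $N(t)=\sum_{a,b}b\,Y_{a,b}(t)$ the number of live points. Since every maximal path of $U$ has two ends and at termination each such end is a frozen vertex that can no longer be extended, $\kappa(U)$ equals, up to $O(1)$, half the number of steps that freeze a path-end plus the number of cycles closed; so the whole problem is to estimate how often \textsc{2greedy} is driven into these events.

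\textbf{Bulk phase: differential equations.} The one-step conditional expectations $\E[\Delta Y_{a,b}\mid\cF_t]$ are, away from the boundary of the natural domain, Lipschitz functions of the scaled variables $y_{a,b}=Y_{a,b}/n$ and $s=t/n$: a \emph{forced} move (served whenever a vertex with $r=1$ is present) and a \emph{random} move (a uniformly chosen live pair added to $U$, used when no such vertex exists) each affect only $O(1)$ vertices. By the differential equation method (Wormald) the trajectory $(Y_{a,b}(t))$ is therefore w.h.p.\ within $n^{1/2+\eps}$ of the solution of the associated autonomous ODE system, as long as one stays a positive distance from its singular set. Solving—or at least qualitatively analysing—this system should locate the critical time $s^\ast$ at which the density of the ``dangerous'' vertices that drive the forced moves vanishes, and identify the scaling of the relevant coordinates near $s^\ast$. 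For $s$ bounded away from $s^\ast$ the probability that a given step freezes a path-end or closes a cycle is $O(1/N(t))$, so the bulk contributes only $O(\log n)$ to $\kappa(U)$.

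\textbf{Critical window and endgame.} All of the $n^{1/5}$ should come from the window around $s^\ast$, where the ODE approximation degrades because the leading term of the driving coordinate is a (degenerate) power of $s^\ast-s$ while both the intrinsic fluctuations and the $n^{1/2+\eps}$ error of the DE method are of comparable order. In that window I would follow the driving coordinate as a near-critical random walk, reseeded by each random move, and bound the number of forced ``collisions'' (two path-ends, or a path-end and a short cycle, compelled to meet at one vertex), balancing their rate against the shrinking $N(t)$; at the point where this ceases to be efficient I would switch to a direct structural analysis of the residual configuration, which by that stage is a sparse random multigraph whose component structure—few cycles, the rest trees—can be controlled by hand. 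This should give $\kappa(U)=\tilde{O}(n^{1/5})$. For the matching lower bound a first/second-moment computation in the same window should show that at least $\tilde{\Omega}(n^{1/5})$ such collisions are forced, because the obstruction subconfigurations are then present in abundance and cannot all be avoided once the residual graph is small. The exponent, and much of this structure, mirrors the Karp--Sipser analysis on random cubic graphs in \cite{FRS}.

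\textbf{Main obstacle.} The crux is the critical-window analysis: there the differential equation method is too lossy (its error term matches or exceeds the quantities that must be controlled), so it must be supplemented by a hands-on study of the near-critical walk governing the forced moves and of the fine structure of the sparse residual random graph, in order to obtain \emph{both} the $\tilde{O}(n^{1/5})$ upper bound and the $\tilde{\Omega}(n^{1/5})$ lower bound. A preliminary but still substantial hurdle is simply to write down the correct ODE system and pin down its singular point, since \textsc{2greedy} involves several interacting vertex types rather than the single leaf-count of Karp--Sipser.
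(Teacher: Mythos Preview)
Your plan is in the right spirit---configuration model with on-line exposure, track vertex-type counts, identify the ``dangerous'' forced-move population (what the paper calls $\zeta=Y_1+2Y_2+Z_1$), and count how often a $\zeta$-vertex is hit by a deletion---but the structure you describe does not match what actually happens, and this matters for the execution.

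There is no interior critical time $s^\ast<1$. The forced-move population $\zeta$ never has positive density: it is a random walk with drift $-\Phi(t)\asymp -(1-t)$ and step variance $\asymp(1-t)^{1/2}$ whenever $\zeta>0$, so throughout the process $\zeta$ stays $O((1-t)^{-1/2}\log n)$, not $\Theta(n)$. The deterministic trajectories (the paper tracks $A=Y+\zeta$ and $B=2Y+Z+\zeta$) run all the way to $t=1$, with $a(t)\asymp(1-t)^{3/2}$; the ``critical window'' is the final stretch $1-t\asymp n^{-2/5}$ where the bound $(1-t)^{-1/2}\log n$ saturates at $n^{1/5}$. Consequently your bulk claim that a step freezes a path-end with probability $O(1/N(t))$ is not automatic: the rate is $\asymp \zeta/M$, and bounding it by $O(1/N)$ already presupposes $\zeta=O(1)$, which is false. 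You must first prove the $\zeta$-bound, and that is where the work is.

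On the technical side, the paper does not invoke Wormald's black-box theorem. It uses hand-built supermartingales: for $A$ a critical-interval argument (as in Bohman--Frieze--Lubetzky) with a correction term $\alpha$ absorbing the $\zeta$-contribution to $\E[\Delta A]$, and for $\zeta$ a Freedman-inequality bound of the shape $\zeta(i)\le\zeta(j)-\sum_{j\le k<i}\Phi(k/n)+\ell_j(t)$ valid on any stretch where $\zeta>0$. The error functions are designed to remain valid up to $T=n-\Theta(n^{7/15}\log^{6/5}n)$; after $T$ one uses only that $A(T)=\tilde O(n^{1/5})$ and that $A$ is non-increasing. This replaces your bulk/window/endgame split by a single concentration statement with $t$-dependent error bars. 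For the lower bound, the paper does not do a second-moment count on obstruction subconfigurations (it is unclear what those would be here). Instead it shows that in the window $i\approx n-n^{3/5}$ the increments of $\zeta$ stochastically dominate a mean-zero walk, applies Freedman's martingale CLT on $\log n$ successive sub-intervals to force $\zeta\ge n^{1/5}/\log n$ somewhere w.h.p., then uses Freedman's tail inequality to keep $\zeta$ that large for $\sim n^{3/5}/\log^3 n$ further steps, during which $\Omega(n^{1/5}/\log^4 n)$ deletions with $\delta=\zeta$ are forced.
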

\noindent We prove Theorem~\ref{thm:main} using the differential equations method for establishing dynamic concentration.  The remainder of the paper is organized as follows.  The \textsc{2greedy} algorithm is introduced in the next Section, and the random variables we track are given in Section~3.  The trajectories that we expect these variables to follow are given in Section~4.   A heuristic explanation of why \textsc{2greedy} should produce a 2-matching with roughly $ n^{1/5} $ components is also given in Section~4.  In Section~5 we state and prove our dynamic concentration result. The proof of Thereom~\ref{thm:main} is then completed in Sections~5, 6, and~7.

\section{The Algorithm}
The Karp-Sipser algorithm for finding a large matching in a sparse random graph is essentially the greedy algorithm, with one slight modification that makes a big difference. While there are vertices of degree one in the graph, the algorithm adds to the matching an edge incident with such a vertex. Otherwise, the algorithm chooses a random edge to add to the matching. The idea is that no mistakes are made while pendant edges are chosen since such edges are always contained in some maximum matching. The algorithm presented in \cite{F11} is a generalization of Karp-Sipser for 2-matchings. Our algorithm is essentially the same as that presented in \cite{F11} applied to random cubic graphs.  A few slight modifications have been made to 
ease the analysis and to account for the change in model. We assume that our input (multi-)graph $G = G([n],E)$ is generated by the configuration 
model of Bollobas \cite{Bol}. Let $W=[3n]$ be our set
of {\em configuration points} and let $W_i=[3(i-1)+1,3i]$,
$i\in [n]$, partition $W$. The function $\f:W\to[n]$ is defined by
$w\in W_{\f(w)}$. Given a
pairing $F$ (i.e. a partition of $W$ into $m=3n/2$ pairs) we obtain a
(multi-)graph $G_F$ with vertex set $[n]$ and an edge $(\f(u),\f(v))$ for each
$\{u,v\}\in F$. Choosing a pairing $F$ uniformly at random from
among all possible pairings $\Omega$ of the points of $W$ produces a random
(multi-)graph $G_F$. It is known that conditional on $G_F$ being simple, i.e. having no loops or multiple edges, that it is equally 
likely to be any (simple) cubic graph. Further, $G_F$ is simple with probability $(1-o(1))e^{-2}$. So from now on we work with $G=G_F$.

We only reveal adjacencies (pairings) of $G_F$ as the need arises in the algorithm. As the algorithm 
progresses, it grows a 2-matching and deletes vertices and edges from the input graph $G$. We let $\G=(V_\G,E_\G)$ be the current state of $G$. Throughout the algorithm 
we keep track of the following:
\begin{itemize}
 \item $U$ is the set of edges of the current 2-matching. The internal vertices and edges of the paths and cycles in $U$ will have been deleted from $\G$.
 \item $b(v)$ is the 0-1 indicator for vertex $v\in[n]$ being adjacent to an edge of $U$. 
 \item $Y_k = \braces{v\in \VG : d_\G(v)=k,\,b(v)=0}$, $k=0,1,2,3.$
 \item $Z_k = \braces{v\in\VG : d_\G(v)=k,\,b(v)=1}$, $k=0,1,2.$
\end{itemize}

We refer to the sets $Y_3$ and $Z_2$ as $Y$ and $Z$ throughout. The basic idea of the algorithm is as follows.  We add edges to the 2-matching one by one, which sometimes forces us to delete edges. These deletions may put vertices in danger of having degree less than 2 in the final 2-matching. Thus, we prioritize the edges that we add to $U$, so as to match the dangerous vertices first.  More precisely, At each iteration of the algorithm, a vertex $v$ is chosen and an adjacent edge is added to $U$.  We choose $v$ from the first non-empty set in the following list: $Y_1, Y_2, Z_1, Y, Z$.  As in the Karp-Sipser algorithm, taking edges adjacent to the vertices in $Y_1$, $Y_2$ and $Z_1$ is not a mistake. 
We will prove that by proceeding in this manner, we do not create too many components. 

When a vertex $v$ is chosen and its neighbor in the configuration is exposed it is called
a $\emph{selection move}$. Call the revealed neighbor, $w$ the \emph{selection}. The edge $(v,w)$ is removed from $\G$ and added to $U$. 
If the selction $w$ is a vertex in $Z$, then once $(v,w)$ is added to $U$, we must delete the other edge adjacent to $w$. Hence we reveal the other edge in the 
configuration adjacent to $w$. Call this exposure a \emph{deletion move}. 

Details of the algorithm are now given.

\noindent\textbf{Algorithm} \textsc{2Greedy}:

Initially, all vertices are in $Y$. Iterate the following steps as long as one of the conditions holds.

\begin{enumerate}
 \item[]{\bf Step 1(a)} $Y_1\ne\emptyset$.

Choose a random vertex $v$ of $Y_1$. Suppose its neighbor in $\G$ is $w$. Remove $(v,w)$ from $\G$ and add it to $U$. Set $b(v)=1$ and move $v$ to $Z_0$. 

$\reas{w}.$

\item[]{\bf Step 1(b)} $Y_1=\es, Y_2\ne \es$.

Choose a random vertex $v$ of $Y_2$. Randomly choose one of the two neighbors of $v$ in $\G$ to expose and call it $w$.  

If $w=v$ ($\braces{v}$ comprises an isolated component in $\G$ with a loop), then remove $(v,v)$ from $\G$ and move $v$ from $Y_2$ to $Y_0$.

Otherwise, remove $(v,w)$ from $\G$ and add it to $U$.  Set $b(v)=1$ and move it to $Z_1$.

$\reas{w}$.

\item[]{\bf Step 1(c)} $Y_1=Y_2=\es, Z_1\ne \es$.

Choose a random vertex $v$ of $Z_1$. $v$ is the endpoint of a path in $U$. Let $u$ be the other endpoint of this path. Suppose the neighbor of $v$ in $\G$ is $w$. Remove $(v,w)$ from $\G$ and add 
it to $U$. Remove $v$ from $\G$. 

$\reas{w}$.

\item[]{\bf Step 2} $Y_1=Y_2=Z_1= \es, Y\ne\es$.

Choose a random vertex $v$ of $Y$. Randomly choose one of the three neighbors of $v$ in $\G$ to expose and call it $w$.  

If $w=v$, then we remove $(v,v)$ from $\G$ and move $v$ to $Y_1$.

Otherwise, remove $(v,w)$ from $\G$ and add it to $U$. Set $b(v)=1$ and move it to $Z$.

$\reas{w}$.

\item[]{\bf Step 3} $Y_1=Y_2=Z_1=Y=\es, Z\ne\es$

The remaining graph is a random 2-regular graph on $\abs{Z}$ many vertices. Put a maximum matching on the remaining graph. Add the edges of this matching to $U$.
\end{enumerate}

\noindent\textbf{Subroutine} $\reas{w}$:
\begin{enumerate}
 \item If $b(w)=0$: 

Set $b(w)=1$ and move $w$ from $Y$ to $Z$, $Y_2$ to $Z_1$ or $Y_1$ to $Z_0$ depending 
on the initial state of $w$.
\item If $b(w)=1$: 

Remove $w$ from $\G$. If $w$ was in $Z$ prior to removal, then the removal of $w$ from $\G$ causes an edge $(w,w')$, to be deleted from $\G$.  Move $w'$ to the appropriate new set. For example, if $w'$ were in $Z$, it would be moved to $Z_1$; if $w'$ were in $Y$, it would be moved to $Y_2$, etc.

\end{enumerate}

\section{The Variables}
In this section we will describe the variables which are tracked as the algorithm proceeds. Throughout the paper, in a slight abuse of notation, we let $Y, Z$, etc. 
refer to both the sets and the size of the set. Let $M$ refer to the size of $E_\G$. We also define the variable 
\[\z := Y_1 + 2Y_2 + Z_1.\]
If $X$ is a variable indexed by $i$, we define
\[\D X(i) := X(i+1) - X(i).\]
\subsection{The sequences $\s, \d$}

We define two sequences $\s, \d$ indexed by the step number $i$. $\s(i)$ will indicate what type of vertex is selected during a selection move, and $\d(i)$ will do 
the same for deletion moves. 

Formally, $\s$ is a sequence of the following symbols: $Y, Z, \z, loop, multi$.
We will put $\s(i)=loop$ only when step $i$ is of type $2$ and the selection move reveals a loop. 
We put $\s(i) = multi$ only when step $i$ is of type $1(c)$, and $w = u \in Z$. The only way this happens is when $v \in Z_1$,  $u \in Z$, $(v, u) \in U$, and the 
selection made at step $i$ happens to select the vertex $u$. 
Otherwise we just put $\s(i) = Y, Z, \z$ according to whether the selected vertex is in $Y, Z, \z$. 

Note that the symbols $loop, multi$ are for very specific events, and not just any loop or multi-edge. If step $i$ is of type $1(b)$ and our selection move reveals a 
loop, then we put $\s(i)=\z$. Also, if step $i$ is of type $1(c)$ and the selection move reveals a multi-edge whose other endpoint is also in $Z_1$ then we put 
$\s(i) = \z$ as well.

$\d$ is a sequence of symbols: $Y, Z, \z, \emptyset$.  We will put $\d(i) = \emptyset$ when there is no deletion move at step $i$ (i.e. when $\s(i) \notin \{Z, multi\}$). Otherwise $\d(i)$ just indicates the type of vertex that the deletion move picks (here we don't make any distinctions regarding loops or 
multi-edges).

\subsection{The variables $A,B$}

We will define the following two important variables:

$$A := Y + \zeta $$
$$B := 2Y + Z + \zeta.$$
$A$ is a natural quantity to define, since the algorithm terminates precisely when $A=0$. $B$ is also natural because it represents the number of half-edges which 
will (optimistically) be added to our current $2$-matching before termination.  We will see that $A$ and $B$ are also nice variables in that their $1$-step changes $\Delta A(i), \Delta B(i)$ do not depend on what type of step we 
take at step $i$. 
We have
\begin{align}
\Delta Y(i) &= -\bfo_{\zeta(i)=0} - \bfo_{\sigma(i)=Y} - \of{\bfo_{\s(i)=Z} + \bfo_{\s(i)=multi}} \bfo_{ \delta(i) = Y}  \\
\Delta Z(i) &= \bfo_{\zeta(i)=0} + \bfo_{\sigma(i)=Y} - \bfo_{\sigma(i)=Z} -\bfo_{\s(i)=loop}-\bfo_{\s(i)=multi}\nonumber \\
& \quad - \of{\bfo_{\s(i)=Z} + \bfo_{\s(i)=multi}}\bfo_{\delta(i)=Z} \\
\Delta \zeta(i) &= -\bfo_{\zeta(i)>0} +\bfo_{\s(i)=loop} - \bfo_{\sigma(i) = \zeta} \nonumber\\
&\quad +\of{\bfo_{\s(i)=Z}+ \bfo_{\s(i)=multi}}\of{ - \bfo_{\delta(i)=\zeta}+  \bfo_{\delta(i) = Z} + 2 \cdot \bfo_{\delta(i) = Y}} \label{1sczeta}
\end{align}
and note that these all depend on whether $\zeta=0$ (i.e. whether step $i$ is of type $1$ or $2$). However,
\begin{align}
\Delta A(i) &= -1 -\bfo_{\sigma(i)=Y} - \bfo_{\sigma(i) = \zeta}+\bfo_{\s(i)=loop}-\bfo_{\s(i)=multi} + \bfo_{\sigma(i)=Z} \nonumber\\
  & \quad-\of{\bfo_{\s(i)=Z} + \bfo_{\s(i)=multi}} 2 \cdot \bfo_{\delta(i)=\zeta} \label{1scA}\\
\Delta B(i) &= -2 +\bfo_{\s(i)=loop}- \bfo_{\delta(i)= \zeta} \label{1scB}
\end{align}
which do not depend on whether $\zeta=0$. For $\D A$, we have used the identity \[\bfo_{\d=Y} + \bfo_{\d=Z}+\bfo_{\d=\z} = \bfo_{\s=Z} + \bfo_{\s=multi}\] which 
states that we make a deletion move if and only if our selection move was $Z$ or $multi$.
Note also that if we establish dynamic concentration on $A, B, \zeta$ then we implicitly establish concentration on $Y, Z, M$ since 
\begin{align}
Y &=  A - \zeta \\
Z &= B-2A + \zeta \\
2M &= 3Y + 2Z + \zeta = 2B-A.
\end{align}

\section{The expected behavior of $A, B, \zeta$}\label{sec:expbehavior}
In this section, we we will non-rigorously predict the behavior of the variables and some facts about the process. 
Throughout the paper, unless otherwise specified, $t$ refers to the scaled version of $i$, so \[t: =\frac{i}{n}.\] Heuristically, we assume there exist 
differentiable functions $a, b$ such that $A(i) \approx na(t), B(i) \approx nb(t)$. Further, we assume that $\zeta$ stays ``small''. We will prove that these 
assumptions are indeed valid. We also let
\[p_z := \frac{2Z}{2M},\quad p_y := \frac{3Y}{2M},\quad p_\z := \frac{\z}{2M}\]
where we have omitted the dependence on $i$ for ease of notation.

\subsection{The trajectory $b(t)$}

Since $B(0)=2n$, and recalling $\eqref{1scB}$, we see that
\begin{equation} \label{eq:Btrajectory}
B(i) = 2n - 2i + \sum_{j \le i} \of{1_{\s(j)=loop}-  1_{\delta(j)=\zeta} }.\end{equation}
The probability that $\s(j)=loop$ or $\d(j)=\z$ on any step $j$ should be negligible.  Thus we expect
$$B(i) \approx 2n-2i = 2n(1-t)$$
so we will set $$b(t) = 2(1-t).$$

\subsection{The trajectory $a(t)$}

We derive an ODE that $a$ should satisfy:
\[a'(t) \approx E[ \Delta A(i) ] \approx -1 -p_y + p_z \approx -\frac{6a(t)}{2b(t)-a(t)}.\]
Note that we have used $\eqref{1scA}$. Thus $a(t)$ should satisfy
\begin{align}
a' = -\frac{6a}{4-4t-a}. \label{diffeq}
\end{align}
The substitution $x=\frac{a}{1-t}$ yields a separable ODE,  which can be integrated to arrive at 
$$(a+2-2t)^3 -27a^2=0.$$
So $a(t)$ is given implicitly as the solution to the above cubic equation with coefficients depending on $t$. We may actually solve that cubic to get three 
continuous explicit functions $a_1(t), a_2(t), a_3(t)$ (though the formulas are nasty to look at). From the initial condition $a(0)=1$ and the fact that 
$0 \leq a(t) \leq 1$, it's clear that the solution we want is

\[a(t) = 7+2t-6 \sqrt{5+4t} \cos\left(\frac{1}{3} \arccos \left( \frac{11+14t+2t^2}{(5+4t)^\frac{3}{2}} \right) + \frac{\pi}{3} \right).\]
 From here we can see that $a(t) \rightarrow 0$ as $t \rightarrow 1^-$. More precisely,

\begin{align}\label{eq:alimit}
&\displaystyle \lim_{t \rightarrow 1^-} \frac{a(t)}{(1-t)^\frac{3}{2}} = \left(\frac{2}{3} \right)^\frac{3}{2}.
\end{align}
To confirm this, note that \[\arccos\of{1-\e} = \sqrt{2\e} + O(\e^{3/2})\] and
\[\frac{11 + 14(1-\e) + 2(1-\e)^2}{\of{5+4(1-\e)}^{3/2}} = 1-\frac{4\e^3}{729} + O(\e^4).\]
Rewriting the $\cos$ term using the angle addition formula and Taylor expansion, we see $\eqref{eq:alimit}$. Additionally,
\begin{align}
 \frac{d}{dt}\of{\frac{a(t)}{(1-t)^{3/2}}} &= -\frac{6a}{2b-a}(1-t)^{-3/2} + \frac{3}{2}a\cdot(1-t)^{-5/2}\\
 &=a\cdot(1-t)^{-5/2}\of{\frac{3}{2}-\frac{6(1-t)}{4(1-t) - a}}\\
 &< 0.
\end{align}
Since $a(0)=1$, for all $0\le t \le 1$ we have 
\begin{equation}\label{eq:abounds}
 \of{\frac{2}{3}}^{\frac{3}{2}}(1-t)^{3/2} \le a(t) \le (1-t)^{3/2}.
\end{equation}

\subsection{Downward drift of $\z$}

We expect $\z$ to be ``small'', and to justify that claim we will show that whenever $\z$ is positive, it is likely to decrease. Assume that $\z(i)>0$. In the 
following table, we make use of the fact that $\d(i) \neq\es$ if and only if $\s(i) \in \braces{Z,multi}$. So for example, \[\bfo_{\d=Y} = (\bfo_{\s=Z} + 
\bfo_{\s=multi})\bfo_{\d=Y}.\] 
Then from $\eqref{1sczeta}$ we see that if $\zeta(i)>0$,

\begin{equation}\label{zetatable}
\Delta \zeta = \begin{cases}
1&\mbox{with prob. } p_zp_y+O\of{\frac{1}{M}} \\
0&\mbox{with prob. } p_z^2+O\of{\frac{1}{M}} \\
-1&\mbox{with prob. } p_y+O\of{\frac{1}{M}}   \\
-2&\mbox{with prob. } p_\z+p_zp_\z+O\of{\frac{1}{M}} 
\end{cases}
\end{equation}
To illuminate this table, we provide an example. 
\[\Pr{\Delta\z(i) = 1} = \Pr{\s(i)=Z, \, \d(i)=Y} = \frac{2Z}{2M-1}\frac{3Y}{2M-3} = p_zp_y + O\of{\frac{1}{M}}.\]
Therefore, roughly speaking we have
\begin{align}
E[ \Delta \zeta (i) ] &= p_z p_y - p_y + O \of{ p_\z} \approx -\frac{9a^2}{(2b-a)^2} 
\end{align}
and are motivated to define
$$\Phi(t) := \frac{9a^2}{(2b-a)^2} = \Theta(1-t)$$
to represent the downward drift of $\z(i)$ (if it is positive) at step $i$.

\subsection{Expected behavior of $\z$}

In the last subsection we estimated $E[ \Delta \zeta (i) ]$ when $\z>0$, using $\eqref{zetatable}$. We can also use $\eqref{zetatable}$ to estimate the variance when $\z>0$. We see that 
$$Var[ \Delta \zeta (i) | \z>0] = \Theta(p_y) = \Theta\of{ (1-t)^\frac{1}{2}}.$$
Thus, to model the behavior of $\z(i)$ we consider a simpler variable: a lazy random walk $X_\t(k)$ with $X_\t(0)=0$, expected 1-step change $\Mean{\Delta X_\t}=-(1-\t)$ and $Var[\Delta X_\t]=(1-\t)^\frac{1}{2}$. After $s$ steps, we have $ \Mean{X_\t(s)}=-(1-\t)s$ and $Var[X_\t(s)]=(1-\t)^\frac{1}{2}s$. 
There is at least constant (bounded away from $0$) probability that $X_\t(s)$ is, say, 1 standard deviation above its mean. However, the probability that $X_\t(s)$ is very many standard deviations larger than that is negligible. In other words, it is reasonable to have a displacement as large as $X_\t(s) = -(1-\t)s + (1-\t)^\frac{1}{4}s^\frac{1}{2}$, but not much larger.
The quantity $\psi(s):=-(1-\t)s + (1-\t)^\frac{1}{4}s^\frac{1}{2}$ is negative for $s > (1-\t)^{-\frac{3}{2}}$. Also $\psi(s)$ is maximized when $s= \frac{1}{2} (1-\t)^{-\frac{3}{2}}$, where we have $\psi(s) = \frac{1}{4} (1-\t)^{-\frac{1}{2}}$.

Now we reconsider the variable $\z$. Roughly speaking, $\z(i)$ behaves like the lazy random walk considered above, so long as we restrict the variable $i$ to a short range (so that $t$ does not change significantly), and we have $\z(i)>0$ for this range of $i$. We have $\z(0)=0$, and $\z$ has a negative drift so it's likely that $\z(j)=0$ for many $j>0$. Specifically, if $j$ is an index such that $\z(j)=0$, then we expect $\z(i)$ to behave like $X_{\t}(i-j)$ with $\t=\frac{j}{n}$, so long as $i$ is not significantly larger than $j$. Thus we expect to have $\z(i)=0$ for some $j \le i \le j+(1-\t)^{-\frac{3}{2}}$. Also, for all $j \le i \le j+(1-\t)^{-\frac{3}{2}}$ we should have $\z(i) \le  \frac{1}{4} (1-\t)^{-\frac{1}{2}}$.
But this rough analysis does not make sense toward the end of the process: indeed, for $j > n - n^\frac{3}{5}$ (i.e. for $1-\t < n^{-\frac{2}{5}}$), we have $j+(1-t)^{-\frac{3}{2}} > n$. However, we can still say something about what happens when $j$ is large, since the variable $s$ cannot be any bigger than $n-j$. Now for $j \ge n - n^\frac{3}{5}$ and $s \le n-j$ we have $\psi(s) \le n^\frac{1}{5}$. Thus, we never expect $\z$ to be larger than $n^{\frac{1}{5}}$, even towards the end of the process.

\subsection{Why do we have $\tilde{\Theta}\of{n^{\frac{1}{5}}}$ many components?}

At any step of the algorithm, we expect the components of the $2$-matching to be mostly paths (and a few cycles). We would like the algorithm to keep making the paths longer, but sometimes it isn't possible to make a path any longer because of deletion moves. Specifically, for example, if one endpoint of a path is in $Z_1$, and then there is a deletion move and the deletion is that endpoint, then that end of the path will never grow. If the same thing happens to the other endpoint of the path, then the path will never get longer, and will never be connected to any of the other paths. Similarly, the number of components in the final $2$-matching is increased whenever the algorithm deletes a $Y_1$ or a $Y_2$. Thus we can bound the number of components in the final $2$-matching by bounding the number of steps $i$ such that $\d(i) = \z$.

Roughly, $\P [\d(i) = \z] = \frac{2Z}{2M} \cdot \frac{\z}{2M} =  \Theta \of{\frac{1}{n} \min \left\{(1-t)^{-\frac{3}{2}} , \frac{n^\frac{1}{5}}{1-t}  \right\} }$. So integrating, we estimate the total number of components as $$\Theta\of{  \int_0^{1-\frac{1}{n}} \min \left\{(1-t)^{-\frac{3}{2}} , \frac{n^\frac{1}{5}}{1-t} \right\} dt} = \Theta \of{n^{\frac{1}{5}} \log n}.$$

\section{The stopping time $T$ and dynamic concentration}

In this section, we introduce a stopping time $T$, before which $A$ and $B$ stay close to their trajectories, and $\zeta$ does roughly what we expect it to do. 
We will also introduce ``error'' terms for both $A, B$ and a ``correction'' term $\a$ for the variable $A$. For most of the process, $\a$ will 
stay smaller than the error term for $A$. However, toward the end of the process $\a$ will be significant. Using $\a$ in our calculations thus allows us to track 
the process farther. As it turns out, the variable $B$ does not need an analogous ``correction'' term. 

We define the following random variables which represent ``actual error'' in $A, B$:
$$e_a(i) := A(i) - na(t) - \a(i)$$
$$e_b(i) := B(i) - nb(t).$$
We define the stopping time $T$ as the minimum of $n - C_T n^\frac{7}{15} \log^\frac{6}{5} n$ and the first step $i$ such that any of the three following conditions 
fail:
\begin{align}
&|e_a(i)| \le f_a\of{t} \label{ea}, \\
& |e_b(i)| \le f_b\of{t} \label{eb},
\end{align}
and for every step $j<i$ such that $\zeta$ is positive on steps $j, \ldots, i$, 
 \begin{align}
 \zeta(i) \le \zeta(j) -\displaystyle \sum_{j \le k < i} \Phi \left(\frac{k}{n} \right)+ \ell_j \of{t } \label{ezeta}
 \end{align}
for some as-yet unspecified error functions $f_a, f_b, \ell_j$ and absolute constant $C_T$. Throughout the paper we will use $C_?$ to refer to unspecified but 
existent absolute constants. In subsection \ref{sec:kappavalues}, we present actual values for these constants.

We anticipate that the conditions on $\zeta$ will  imply that for some function $f_\zeta$ we have 
$$\zeta(i) \le f_\zeta \of{t}$$
for all $i \le T$.
Our goal for now is to prove that for some suitable error functions, w.h.p. $T$ is not triggered by any of the conditions $\eqref{ea}, \eqref{eb}, \eqref{ezeta}$.

\begin{theorem} \label{Ttheorem}
With high probability, $$T =n - C_T n^\frac{7}{15} \log^\frac{6}{5} n.$$
\end{theorem}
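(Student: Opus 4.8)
The plan is to prove Theorem~\ref{Ttheorem} by a union-bound over the three ways $T$ could be triggered before step $n - C_T n^{7/15}\log^{6/5} n$, using a martingale / supermartingale argument for each. For the $e_a$ and $e_b$ conditions, I would follow the standard differential-equations-method template: write $\Delta e_a(i) = \Delta A(i) - n\,\Delta a(t) - \Delta \alpha(i)$ and $\Delta e_b(i) = \Delta B(i) - n\,\Delta b(t)$, compute the conditional expectation of the one-step change given the history $\mathcal{F}_i$, and Taylor-expand $a(t+1/n)-a(t) = a'(t)/n + O(1/n^2)$ (and similarly for $b$). Using $\eqref{1scA}$, $\eqref{1scB}$ and the probabilities $p_y,p_z,p_\zeta$ expressed via $A,B,\zeta$, together with the ODEs $\eqref{eq:Btrajectory}$ and $\eqref{diffeq}$ that $a,b$ solve \emph{exactly}, the ``main'' terms cancel and one is left with an error of size roughly $(|e_a| + |e_b| + \zeta)\cdot(\text{something})/n$ plus the correction-term contribution. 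The role of $\alpha$ is precisely to absorb the secular growth of this error near $t\to 1$, where the denominator $2b-a = \Theta((1-t))$ is small; one chooses $\alpha$ so that $n\,\Delta\alpha(i)$ matches the deterministic part of $\E[\Delta e_a \mid \mathcal{F}_i]$ that would otherwise cause drift, leaving a genuine supermartingale-type bound $|\E[\Delta e_a\mid \mathcal{F}_i]| \le (\text{small})\cdot f_a(t)/n$, and similarly for $e_b$.

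Next I would set up the variance/boundedness estimates needed for a concentration inequality of Azuma--Hoeffding or Freedman type. The one-step changes $\Delta A, \Delta B$ are bounded by an absolute constant (inspect $\eqref{1scA}$, $\eqref{1scB}$: each is $O(1)$), and $\Delta\alpha$ and $n\Delta a, n\Delta b$ are $O(1)$ as well in the relevant range because $a',b'$ are bounded until very close to $t=1$ — here the cutoff $n - C_T n^{7/15}\log^{6/5}n$ is chosen so that $|a'(t)|\asymp (1-t)^{1/2}$ stays controlled and $a(t) = \Theta((1-t)^{3/2})$ stays well above the error scale. I would then choose $f_a, f_b$ of the form $C\, n^{?}(1-t)^{?}\log^{?} n$ — large enough that the accumulated martingale deviation over $i$ steps is smaller than $f_a(t)$ with probability $1 - n^{-\omega(1)}$ (this is where the $\log^{6/5} n$ factors enter), yet the supermartingale drift computed above keeps $|\E[\Delta e_a\mid\mathcal F_i]|$ within the ``budget'' that $f_a$ allows to be spent per step, i.e.\ $f_a'(t)/n$ dominates the drift. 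Define the auxiliary process $e_a(i) - (\text{drift bound})$ to be a supermartingale and apply Freedman's inequality with the variance bound; take a union bound over all $i \le T$.

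For the $\zeta$ condition $\eqref{ezeta}$, the argument is different in flavor: here we do not track $\zeta$ along a trajectory but rather bound its excursions above $0$. Fixing an index $j$ with $\zeta(j)=0$, consider the process $\zeta(i) - \zeta(j) + \sum_{j\le k<i}\Phi(k/n)$ while $\zeta$ stays positive; by $\eqref{zetatable}$ (whose probabilities are valid provided $e_a, e_b$ have not yet been violated, i.e.\ for $i\le T$) this has non-positive conditional drift up to $O(p_\zeta + 1/M)$, so it is essentially a supermartingale, with one-step increments bounded by $2$ and conditional variance $\Theta(p_y) = \Theta((1-t)^{1/2})$. Freedman's inequality over the (at most $O((1-t)^{-3/2}\log n)$, or near the end $O(n^{3/5})$) steps of the excursion gives that $\zeta(i) \le \zeta(j) - \sum\Phi + \ell_j(t)$ for a suitable $\ell_j(t) \asymp (1-t)^{1/4}\sqrt{(\text{excursion length})}\cdot \sqrt{\log n} + \log n$, which is exactly the $\psi$-heuristic of Section~4 made rigorous; union-bounding over all $\le n$ choices of $j$ costs only a $\log n$ factor in $\ell_j$.

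The main obstacle, I expect, is the coupled and self-referential nature of the three conditions near the end of the process: the error bounds $f_a, f_b$ feed into the validity of the probabilities in $\eqref{zetatable}$, while the bound on $\zeta$ feeds back into the drift computation for $e_a$ (through the $\bfo_{\delta(i)=\zeta}$ and $\bfo_{\delta(i)=Y}$ terms in $\eqref{1scA}$) — so all three must be run simultaneously under the single stopping time $T$, and the error functions must be chosen consistently so that each one's ``per-step budget'' strictly dominates the drift contributed by the worst case of the others. Getting the exponents to close — in particular verifying that with $a(t)=\Theta((1-t)^{3/2})$ and the cutoff $1-t \gtrsim n^{-8/15}\log^{6/5}n$ the ratio (error)/(trajectory value) stays $o(1)$ and the martingale deviations stay below $f_a, f_b$ — is the delicate bookkeeping that the choice of constants in subsection~\ref{sec:kappavalues} is designed to handle, and is where essentially all the real work lies. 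The correction term $\alpha$ is the device that buys the extra reach from the naive $n - n^{?}$ cutoff out to $n - C_T n^{7/15}\log^{6/5}n$.
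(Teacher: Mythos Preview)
Your proposal is correct and follows essentially the same route as the paper: union bound over the three triggering conditions, supermartingale plus concentration (Azuma--Hoeffding for $A$, Freedman for $\zeta$ and $B$), with the $\zeta$ excursion bound feeding back into the choice of $f_\zeta$ and thence into the drift estimates for $A$ and $B$. Two implementation points you should be aware of: first, the paper handles the self-referential drift term $-\tfrac{12b\,e_a}{n(2b-a)^2}$ in $\E[\Delta e_a\mid\mathcal F_i]$ not by absorbing it into $\alpha$ but via a \emph{critical interval} trick (introduced in \cite{BFL10}): one only runs the supermartingale $A^+$ from the moment $e_a$ enters $[\tfrac34 f_a,f_a]$, so that $e_a\ge g_a>0$ guarantees the drift is genuinely negative. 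Second, $\alpha$ is not chosen to match a ``deterministic'' drift but rather is defined by the random recursion $\Delta\alpha(i)=\tfrac{4(a+b)\zeta(i)-12b\,\alpha(i)}{n(2b-a)^2}$, whose purpose is specifically to cancel the $\zeta$-dependent contribution to $\E[\Delta A\mid\mathcal F_i]$; this is what allows the $A$ argument to survive even when $\zeta$ is of order $n^{1/5}$ near the end. The treatment of $B$ is also simpler than you sketch, since $e_b(i)=\sum_{j\le i}(\bfo_{\sigma(j)=loop}-\bfo_{\delta(j)=\zeta})$ exactly, so one bounds that sum directly by Freedman rather than Taylor-expanding $b$.
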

The remainder of this section contains the proof of Theorem $\ref{Ttheorem}$. Here we define the error functions $f_a,f_b, f_\z$ (up to the choice of constants). While these definitions are not very enlightening at this point, they will aid the reader in confirming many of the calculations that appear below. Those same calculations will motivate the choice of these functions.
\begin{align}
 f_a(t) &:=  C_A (1-t)^\frac{3}{4} n^\frac{1}{2} \log^\frac{1}{2} n \\
 f_b(t) &:= C_B \cdot \begin{cases}
 (1-t)^{-\frac{1}{2}} \log n&: 1-t > n^{-\frac{2}{5}} \log^\frac{2}{5} n \\
-n^\frac{1}{5} \log^\frac{4}{5} n \log(1-t)&: \textrm{otherwise} \\
\end{cases}\\
f_\zeta (t) &:= C_\z \min \left\{ (1-t)^{-\frac{1}{2}} \log n, n^\frac{1}{5} \log^\frac{4}{5} 
n \right\}.
\end{align}

\subsection{A useful lemma}

We'll use the following simple lemma several times to estimate fractions.

\begin{lemma} \label{estlemma}

For any real numbers $x, y, \epsilon_x, \epsilon_y$, if we have $x,y \neq 0$ and $\left|\frac{\epsilon_x}{x}\right|, \left|\frac{\epsilon_y}{y}\right| 
\le \frac{1}{2}$, then

$$\frac{x+ \epsilon_x}{y+\epsilon_y} - \frac{x}{y} = \frac{y \epsilon_x - x \epsilon_y}{y^2} + O\left(\frac{y \epsilon_x \epsilon_y + x \epsilon_y^2}{y^3} \right)$$

\end {lemma}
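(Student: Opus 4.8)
The plan is to prove Lemma~\ref{estlemma} by a direct Taylor expansion of the geometric series, treating $\eps_y/y$ as the small quantity. First I would write
\[
\frac{x+\eps_x}{y+\eps_y} = \frac{x+\eps_x}{y}\cdot\frac{1}{1+\eps_y/y},
\]
which is legitimate since $y\neq 0$, and then expand $\frac{1}{1+u}$ for $u=\eps_y/y$ with $|u|\le \frac12$. For such $u$ one has $\frac{1}{1+u} = 1 - u + u^2 R(u)$ where $R(u) = \frac{1}{1+u}$ is bounded in absolute value by $2$ on $|u|\le\frac12$; hence $\frac{1}{1+u} = 1 - u + O(u^2)$ with an absolute implied constant. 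Substituting back gives
\[
\frac{x+\eps_x}{y+\eps_y} = \frac{x+\eps_x}{y}\left(1 - \frac{\eps_y}{y} + O\!\left(\frac{\eps_y^2}{y^2}\right)\right)
= \frac{x}{y} + \frac{\eps_x}{y} - \frac{x\eps_y}{y^2} - \frac{\eps_x\eps_y}{y^2} + O\!\left(\frac{(x+\eps_x)\eps_y^2}{y^3}\right).
\]

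Next I would subtract $x/y$ from both sides and collect the remaining terms. The linear part is $\frac{\eps_x}{y} - \frac{x\eps_y}{y^2} = \frac{y\eps_x - x\eps_y}{y^2}$, which matches the claimed main term. It remains to check that the two leftover contributions, namely $-\frac{\eps_x\eps_y}{y^2}$ and $O\!\big(\frac{(x+\eps_x)\eps_y^2}{y^3}\big)$, are both absorbed into the stated error $O\!\big(\frac{y\eps_x\eps_y + x\eps_y^2}{y^3}\big)$. For the first term, $\big|\frac{\eps_x\eps_y}{y^2}\big| = \big|\frac{y\eps_x\eps_y}{y^3}\big|$, so it is at most the absolute value of the first summand in the error term. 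For the $O$-term, expand $(x+\eps_x)\eps_y^2 = x\eps_y^2 + \eps_x\eps_y^2$; the piece $x\eps_y^2/y^3$ is exactly the second summand of the error, and the piece $\eps_x\eps_y^2/y^3$ satisfies $\big|\frac{\eps_x\eps_y^2}{y^3}\big| = \big|\frac{\eps_x\eps_y}{y^2}\big|\cdot\big|\frac{\eps_y}{y}\big| \le \frac12\big|\frac{y\eps_x\eps_y}{y^3}\big|$ using the hypothesis $|\eps_y/y|\le\frac12$, so it too is controlled by the first summand of the error. Combining these bounds yields the lemma.

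I do not expect any real obstacle here: the only mild subtlety is bookkeeping to make sure all implied constants are absolute (independent of $x,y,\eps_x,\eps_y$), which is why the hypotheses $|\eps_x/x|,|\eps_y/y|\le\frac12$ are imposed — in fact only $|\eps_y/y|\le\frac12$ is needed for the argument above, with the bound on $\eps_x/x$ presumably included for convenience at the call sites. One should also note that the error term as written is not a strict bound but an $O(\cdot)$ statement, so the triangle-inequality estimates above suffice without tracking exact constants. The whole proof is three or four lines once the expansion is set up.
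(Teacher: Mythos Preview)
Your proof is correct and follows essentially the same approach as the paper: both expand $\frac{1}{1+\eps_y/y}=1-\frac{\eps_y}{y}+O\!\left(\frac{\eps_y^2}{y^2}\right)$ and collect terms, the only cosmetic difference being that the paper factors out $\frac{x}{y}\bigl(1+\frac{\eps_x}{x}\bigr)$ whereas you factor out $\frac{x+\eps_x}{y}$. Your observation that only the hypothesis $|\eps_y/y|\le\frac12$ is actually used is accurate and a nice byproduct of your factoring.
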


\begin{proof}
\begin{align*}
\frac{x+ \epsilon_x}{y+\epsilon_y} - \frac{x}{y} &= \frac{x}{y} \left\{ \left(1+\frac{\epsilon_x}{x} \right) \cdot \frac{1}{1+\frac{\epsilon_y}{y}} -1 \right\}\\
&= \frac{x}{y} \left\{ \left(1+\frac{\epsilon_x}{x} \right) \cdot \left[1 - \frac{\epsilon_y}{y} + O\left(\frac{\epsilon_y^2}{y^2} \right) \right] -1 \right\}\\
&= \frac{x}{y} \left\{ \frac{\epsilon_x}{x} - \frac{\epsilon_y}{y} + O\left(\frac{\epsilon_x \epsilon_y}{x y}+\frac{\epsilon_y^2}{y^2} \right)\right\}\\
&= \frac{y \epsilon_x - x \epsilon_y}{y^2} + O\left(\frac{y \epsilon_x \epsilon_y + x \epsilon_y^2}{y^3} \right)
\end{align*}
\end{proof}

\subsection{$T$ is not triggered by $A$}

We define 

$$A^+ (i) := A(i) - n  a(t) -\a(i) - f_a(t) = e_a(i) - f_a(t)$$
and let the stopping time $T_j$ be the maximum of $j$, $T$, and the least index $i \geq j$ such that $e_a(i)$ is not in the critical interval 
\begin{equation}\label{eq:acrit}
 [g_a(t),f_a(t)]
\end{equation}
where $0< g_a < f_a$ is an as-yet unspecified function of $n, t$. Our strategy is to show that w.h.p. $A$ never goes above $na+ \a+f_a$ because every time $e_a$ 
enters the critical interval, w.h.p. it does not exit the interval at the top. The use of critical intervals in a similar context was first introduced in \cite{BFL10}.

Let $\mathcal{F}_i$ be the natural filtration of the process (so conditioning on $\mathcal{F}_i$ tells us the values of all the variables, among other things).

For $i < T$, we have from $\eqref{1scA}$ that
\begin{align*}
&E[ \Delta A (i)| \mathcal{F}_i ] = -1 - \frac{3Y}{2M} - \frac{\zeta}{2M}  + \frac{2Z}{2M}  - 2 \cdot \frac{2Z}{2M} \cdot \frac{\zeta}{2M}  + O \of{ \frac{1}{M}}\\
&= -\frac{6A}{2B-A} + \frac{4\zeta(A+B)}{(2B-A)^2} +O \of{ \frac{1}{2B-A}+\frac{\zeta^2}{(2B-A)^2}}  \\
&= -\frac{6\of{na + \a + e_a}}{2(nb + e_b)-(na+\a+e_a)}+\frac{4\zeta\sqbs{\of{na + \a + e_a}+(nb + e_b)}}{\sqbs{2(nb + e_b)-\of{na + \a + e_a}}^2}+O 
\of{ \frac{1}{2B-A}+\frac{\zeta^2}{(2B-A)^2}}\\
&= -\frac{6a}{2b-a} + \frac{12ae_b - 12b \of{\a+ e_a}}{n(2b-a)^2} + \frac{4(a+b)\zeta}{n(2b-a)^2} +O \of{  \frac{1}{n(2b-a)}+ 
\frac{\a^2 + f_a^2 + f_b^2 + f_\zeta^2}{n^2(2b-a)^2}   } 
\end{align*}
The last equality follows from Lemma \ref{estlemma}.  Note that the lemma actually implies that the big-$O$ term includes mixed products of terms like 
$\a\cdot f_\z$ for example. We have simplified by using the fact that for all real numbers $x$ and $y$, $\abs{xy} \le\frac{1}{2}\of{x^2 + y^2}.$
We are now motivated to cancel out the $\zeta$ term in the last line by recursively defining 
\begin{align}
\a(0)&:=0 \\
\a(i+1)&:= \a(i) + \frac{4(a+b)\zeta -12b \a(i)}{n(2b-a)^2}. \label{eq:alphadef}
\end{align}

From this definition and the defintion of $f_\z$, it follows that for $i\le T$,
\begin{equation}\label{eq:alphabound}
0 \le \a(i) \le \sum_{j=0}^{i}\frac{4(a+b)f_\z}{n(2b-a)^2} \le C_\a \cdot
\begin{cases}
 \log n(1-t)^{-1/2}&\mbox{ for }i\le n-n^{3/5}\log^{2/5}n \\
 n^{1/5}\log^{9/5}n&\mbox{ for }n-n^{3/5}\log^{2/5}n < i \le T.
\end{cases}
\end{equation}  
as long as \begin{equation}\label{cond1}C_\a > 8 C_\z.\end{equation}

Now for $j \le i < T_j$, we have the supermartingale condition
\begin{align}
E[ \Delta A^+ (i)| \mathcal{F}_i ] &= E[ \Delta A (i)| \mathcal{F}_i ] - a'(t) - \displaystyle\frac{ 4(a+b)\z - 12b \a(i)}{n(2b-a)^2} - 
\frac{1}{n} f_a'(t)\nonumber\\
&\quad + O\left(  \frac{1}{n} a''(t) + \frac{1}{n^2} f_a''(t)\right)\\
& \le -\frac{  12 b g_a}{n(2b-a)^2} - \frac{1}{n} f_a'(t)\nonumber\\
&\quad +O\left( \frac{ a f_b }{n(2b-a)^2}+ \frac{1}{n(2b-a)}+\frac{\a^2 + f_a^2 + f_b^2 + f_\zeta^2}{n^2(2b-a)^2} +  
\frac{1}{n} a''(t) + \frac{1}{n^2} f_a''(t)\right) \label{eq:asmcondition}
\end{align}
Note that in the last line we have used $\eqref{eq:alphadef}$, the fact that $e_a \ge g_a$, and also that $a$ satisfies the differential equation $\eqref{diffeq}$. By taking $g_a=\Omega\of{f_a}$, we see that $A^+(j),\ldots,A^+(T_j)$ is a supermartingale since
\begin{align*}
 -\frac{  12 b g_a}{n(2b-a)^2} - \frac{1}{n} f_a' &= -\Omega\of{ n^{-1/2}\log^{1/2}n (1-t)^{-1/4}} 
\end{align*}
which dominates the big-$O$ term in $\eqref{eq:asmcondition}$.  

We use the following asymmetric version of the Azuma-Hoeffding inequality (for a proof see \cite{Bo10}):

\begin{lemma} Let $X_j$ be a supermartingale, such that  $-C \leq \Delta X(j) \leq c$ for all $j$, for $c < \frac{C}{10}$. Then for any $a < c m$ 
we have $Pr(X_m -X_0 > a) \leq \exp \left(- \frac{a^2}{3 c C m}\right)$ 
\end{lemma}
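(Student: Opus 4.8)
The plan is to run the standard exponential--moment (Chernoff--Cram\'er) argument, with the asymmetry between $c$ and $C$ exploited through the supermartingale inequality $\E[\Delta X(j)\mid\mathcal{F}_j]\le 0$ rather than merely through $|\Delta X(j)|\le C$. Fix a parameter $\lambda>0$, to be optimised at the end, with $0<\lambda\le 1/C$. The goal is to show $\E[e^{\lambda(X_m-X_0)}]\le e^{\beta\lambda^2 cC m}$ for a suitable absolute constant $\beta<\tfrac{3}{4}$, and then conclude by Markov's inequality.

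The crux is a one-step estimate. Write $W=\Delta X(j)$, so that $\E[W\mid\mathcal{F}_j]\le 0$ and $-C\le W\le c$; since $\lambda W\le\lambda c\le c/C<1/10$, a sufficiently sharp second-order Taylor bound such as $e^x\le 1+x+\tfrac{3}{5}x^2$ (which holds for all $x\le 1/10$, in particular for all $x\le 0$) gives
\[
\E[e^{\lambda W}\mid\mathcal{F}_j]\ \le\ 1+\lambda\,\E[W\mid\mathcal{F}_j]+\tfrac{3}{5}\lambda^2\,\E[W^2\mid\mathcal{F}_j].
\]
Now use the elementary inequality $W^2\le(c-C)W+cC$, valid on $[-C,c]$ because $(W+C)(c-W)\ge 0$, to obtain
\[
\E[e^{\lambda W}\mid\mathcal{F}_j]\ \le\ 1+\lambda\bigl(1+\tfrac{3}{5}\lambda(c-C)\bigr)\E[W\mid\mathcal{F}_j]+\tfrac{3}{5}\lambda^2 cC.
\]
Since $\lambda\le 1/C\le\tfrac{5}{3(C-c)}$, the coefficient $1+\tfrac{3}{5}\lambda(c-C)$ is non-negative, so the middle term is $\le 0$ and may be dropped, leaving $\E[e^{\lambda W}\mid\mathcal{F}_j]\le 1+\tfrac{3}{5}\lambda^2 cC\le e^{\frac{3}{5}\lambda^2cC}$. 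This is exactly where the hypothesis $c<C/10$ is used: it forces $cC$ (rather than the $C^2$ of the symmetric Azuma--Hoeffding bound) into the exponent.

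Iterating via the tower property, $\E[e^{\lambda(X_m-X_0)}]=\E\bigl[e^{\lambda(X_{m-1}-X_0)}\,\E[e^{\lambda\Delta X(m-1)}\mid\mathcal{F}_{m-1}]\bigr]\le e^{\frac{3}{5}\lambda^2 cC}\,\E[e^{\lambda(X_{m-1}-X_0)}]$, so inductively $\E[e^{\lambda(X_m-X_0)}]\le e^{\frac{3}{5}\lambda^2 cC m}$. Markov's inequality then yields $\Pr{X_m-X_0>a}\le\exp\bigl(\tfrac{3}{5}\lambda^2 cCm-\lambda a\bigr)$, and choosing $\lambda=\tfrac{5a}{6cCm}$ gives $\exp\bigl(-\tfrac{5a^2}{12\,cCm}\bigr)\le\exp\bigl(-\tfrac{a^2}{3cCm}\bigr)$. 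It then remains only to check that this $\lambda$ lies in the admissible window: from $a<cm$ we get $\lambda=\tfrac{5a}{6cCm}<\tfrac{5}{6C}<\tfrac1C$ and $\lambda c<\tfrac{5c}{6C}<\tfrac1{12}<\tfrac1{10}$, so both constraints used above hold. The whole computation is routine; the only real subtlety is this bookkeeping — pinning down the admissible range of $\lambda$ and recognising that combining the nonpositive drift with $c<C/10$ is what replaces the denominator $C^2m$ by $cCm$.
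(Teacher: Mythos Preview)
Your proof is correct. The Chernoff--Cram\'er argument is carried out cleanly: the Taylor bound $e^x\le 1+x+\tfrac{3}{5}x^2$ does hold for all $x\le 1/10$ (indeed for all $x\le\ln(6/5)$), the quadratic bound $W^2\le (c-C)W+cC$ on $[-C,c]$ is exactly what converts $\E[W^2]$ into $cC$ plus a term of the right sign, and the final bookkeeping on $\lambda$ is accurate.

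For comparison, the paper does not give its own proof of this lemma at all; it simply quotes the statement and refers the reader to \cite{Bo10} (Bohman, \emph{The Triangle-Free Process}). So your write-up is strictly more than what the paper provides. The approach you take is the standard one and is essentially what appears in \cite{Bo10}; the only thing worth flagging is that your constant $3/5$ in the Taylor bound is not canonical---any constant strictly bigger than $1/2$ works once $x$ is bounded above by something less than $\ln 2$, and different sources make slightly different choices, which is why the final constant in the exponent varies across references.
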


We have $$ -2 \le \Delta A \le 0$$
and $$- 2  \of{ 1-t }^\frac{1}{2} \le a'(t) \le 0.$$
This follows from analysis of the function $a(t)$.
So  $$ - 2  \le \Delta A^+ \le  2 \of{ 1-\frac{j}{n} }^\frac{1}{2} $$
for the supermartingale $A^+(j) \cdots A^+({T_j)}.$
Thus, if $A$ crosses its upper boundary at the stopping time $T$, then there is some step $j$ (with $T=T_j$) such that $$A^+(j) \le g_a \left(\frac{j}{n} \right) 
-  f_a \left(\frac{j}{n} \right)+ 2$$ and $A^+\of{T_j}>0$. In this case, $j$ is intended to represent the step when $e_a$ enters the crtical interval, 
$\eqref{eq:acrit}$.
 Applying the lemma we see that the probability of the supermartingale $A^+$ having such a large upward deviation has probability at most 
$$\exp\left\{ -  \displaystyle \frac{\of{f_a\of{\frac{j}{n}} - g_a\of{\frac{j}{n}} -2}^2}{ 12 n\of{1-\frac{j}{n}}^\frac{3}{2}   }  \right\}.$$
As there are $O\of{n}$ supermartingales $A^+ (j), \ldots, A^+ (T_j)$, we must choose $f_a, g_a$ to make the above probability $o\of{\frac{1}{n}}$. The following choice suffices:
$$f_a(t) =  C_A (1-t)^\frac{3}{4} n^\frac{1}{2} \log^\frac{1}{2} n$$
 $$g_a(t) = \frac{3}{4} f_a(t).$$
as long as the constant $C_A$ is chosen so that \begin{equation}\label{cond2}\frac{ \of{ \frac{1}{4} C_A}^2}{12} >1.\end{equation}

If we define \[A^-:=A -na - 
\a + f_a = e_a + f_a\] then we may prove that $A^-$ stays positive w.h.p. in a completely analogous fashion.

\subsection{$T$ is not triggered by $\z$}

Referring to $\eqref{zetatable}$, we may say that if $\z(i) > 0,$
\begin{align}
E[ \Delta \zeta (i)| \mathcal{F}_i ] &= p_z p_y - p_y + O \of{ p_\z} = -\frac{9A^2}{(2B-A)^2} + O \of{\frac{\zeta}{2B-A}}. \label{E1sczeta}
\end{align}
Now, before $T$ we have
\begin{align}
 \frac{9a^2}{(2b-a)^2}-\frac{9A^2}{(2B-A)^2}   &=  -9\of{ \frac{A}{2B-A} - \frac{a}{2b-a} } \of{ \frac{A}{2B-A} + \frac{a}{2b-a} }\nonumber\\
&= -9\left[ \frac{2b(\a+e_a) -2ae_b}{n(2b-a)^2} + O \of{ \frac{\a^2 + f_a^2 + f_b^2}{n^2(2b-a)^2} } \right]\nonumber\\
&\qquad\times\left[ 2 \of{ \frac{a}{2b-a} } + \frac{2b(\a+e_a) -2ae_b}{n(2b-a)^2} + O \of{ \frac{\a^2 + f_a^2 + f_b^2}{n^2(2b-a)^2} } \right]\nonumber\\
&=  \frac{36a(ae_b-b\a-be_a)}{n(2b-a)^3} + O\left(\frac{\a^2 + f_a^2 + f_b^2}{n^2(2b-a)^2} \right)\label{eq:phierror}.
\end{align}
In the last step we have cleaned up the big-$O$ using the facts $$\frac{\a + f_a+f_b}{n(2b-a)} = o(1)\qquad \textrm{ and }
\qquad\frac{a}{2b-a} = O(1).$$
For every step $j$, we define a stopping time $$T_j:= \min \left\{  i(j),\max(j, T) \right\}$$ where $i(j)$ is the least index $i \ge j$ such that $\zeta(i)=0$.  
Also, define a sequence $\zeta^+_j (j) \cdots \zeta^+_j(T_j)$, where

$$\zeta^+_j(i) := \zeta(i) + \displaystyle \sum_{j \le k < i} \Phi \left( \frac{k}{n} \right) - h_j\left( \frac{i}{n} \right)$$
where $h_j$ is some function we will choose that will make $\zeta^+_j(i)$ a supermartingale. 
Now for $j \le i < T_j$, using $\eqref{eq:phierror}$, we have
\begin{align}
E[ \Delta \zeta_j^+ (i)| \mathcal{F}_i ] &=-\frac{9A^2}{(2B-A)^2}  + \frac{9a^2}{(2b-a)^2} - \frac{1}{n} h_j'(t)    + 
O\of{\frac{\zeta}{2B-A}+ \frac{1}{n^2} h_j''(t)} \\
&\le    \frac{36a(af_b+bf_a)}{n(2b-a)^3} - \frac{1}{n} h_j'(t)+O\of{\frac{\a^2 + f_a^2 + f_b^2}{n^2(2b-a)^2} + \frac{f_\zeta}{n(2b-a)} +\frac{1}{n^2} h_j''(t) }.
\label{superzeta}
\end{align}
Note that
 \begin{equation} \label{eq:zetaplusub}
  \frac{36a(af_b+bf_a)}{(2b-a)^3}  \le  \of{\frac{9}{8} C_A + o(1)} n^{\frac{1}{2}} \log^\frac{1}{2} n \of{1-t}^\frac{1}{4}
 \end{equation}
so the choice
$$h_j(t) := C_h \of{1-\frac{j}{n}}^\frac{1}{4} n^\frac{1}{2} \log^\frac{1}{2} n  \of{t-\frac{j}{n}}$$
 makes the sequence a supermartingale as long as the constant $C_h$ is chosen so that \begin{equation}\label{cond3}C_h > \frac{9}{8} C_A.\end{equation}
Since $h_j\of{\frac{j}{n}}=0$, we will always have $\zeta^+_j(j) = \zeta(j)$.

We'll use the following supermartingale inequality due to Freedman \cite{F75}:
\begin{lemma}\label{lem:Freedman}
Let $X_i$ be a supermartingale, with $\Delta X_i \leq C$ for all $i$, and $V(i) :=\displaystyle \sum_{k \le i} Var[\Delta X(k)| \mathcal{F}_{k}]$  Then
$$P\left[\exists i: V(i) \le v, X_i - X_0 \geq d \right] \leq \displaystyle \exp\left(-\frac{d^2}{2(v+Cd) }\right).$$ \end{lemma}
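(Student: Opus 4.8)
The plan is to prove this (it is Freedman's tail inequality, \cite{F75}) by the classical exponential-supermartingale (Bennett/Bernstein) method. Fix a parameter $\lambda\ge 0$ and set $\phi(\lambda):=(e^{\lambda C}-1-\lambda C)/C^2$. The computational core is the scalar estimate
\[ e^{\lambda x}\ \le\ 1+\lambda x+\phi(\lambda)\,x^2\qquad\text{for all }x\le C, \]
which holds because $g(x):=(e^{\lambda x}-1-\lambda x)/x^2$ is non-decreasing on $\mathbb{R}$, so $g(x)\le g(C)=\phi(\lambda)$ for $x\le C$; the monotonicity follows by writing the numerator of $g'(x)$ as $N(\lambda x)$ with $N(u):=(u-2)e^u+u+2$ and noting $N(0)=N'(0)=0$ and $N''(u)=u\,e^u$, so $N(u)$ and $x^3$ have the same sign.

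Next I would consider the process $W_i:=\exp\!\big(\lambda(X_i-X_0)-\phi(\lambda)\,V(i-1)\big)$ (the shift to $V(i-1)$ only makes the subtracted term $\mathcal{F}_{i-1}$-measurable, and can only increase $W_i$), and argue that it is a non-negative $(\mathcal{F}_i)$-supermartingale with $W_0\le 1$. Conditioning on $\mathcal{F}_i$ and applying the scalar estimate to $\Delta X_i\le C$ gives
\[ \mathbb{E}\big[e^{\lambda\Delta X_i}\mid\mathcal{F}_i\big]\ \le\ 1+\lambda\,\mathbb{E}[\Delta X_i\mid\mathcal{F}_i]+\phi(\lambda)\,\mathbb{E}\big[(\Delta X_i)^2\mid\mathcal{F}_i\big]\ \le\ 1+\phi(\lambda)\,\mathbb{E}\big[(\Delta X_i)^2\mid\mathcal{F}_i\big], \]
using the supermartingale hypothesis $\mathbb{E}[\Delta X_i\mid\mathcal{F}_i]\le 0$; since $1+x\le e^x$ this is at most $\exp\!\big(\phi(\lambda)\,\mathrm{Var}[\Delta X_i\mid\mathcal{F}_i]\big)$ once the conditional second moment has been replaced by the conditional variance (the one point that needs care, discussed below), and then $\mathbb{E}[W_{i+1}\mid\mathcal{F}_i]=W_i\,e^{-\phi(\lambda)\mathrm{Var}[\Delta X_i\mid\mathcal{F}_i]}\,\mathbb{E}[e^{\lambda\Delta X_i}\mid\mathcal{F}_i]\le W_i$.

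With the supermartingale in hand the rest is routine. Let $\tau:=\min\{i:X_i-X_0\ge d\}$ (with $\min\emptyset=\infty$). Since $V(\cdot)$ is non-decreasing, on the event $E:=\{\exists\,i:\ V(i)\le v,\ X_i-X_0\ge d\}$ we have $\tau<\infty$ and $V(\tau-1)\le v$, hence $W_\tau\ge e^{\lambda d-\phi(\lambda)v}$ on $E$. Optional stopping of the non-negative supermartingale $W$ at the bounded time $\tau\wedge N$ gives $\mathbb{E}[W_{\tau\wedge N}]\le\mathbb{E}[W_0]\le 1$, so Markov's inequality yields $\mathbb{P}(\tau\le N,\ V(\tau-1)\le v)\le e^{-\lambda d+\phi(\lambda)v}$, and letting $N\to\infty$ gives $\mathbb{P}(E)\le e^{-\lambda d+\phi(\lambda)v}$ for every $\lambda\ge 0$. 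Finally, minimizing $\phi(\lambda)v-\lambda d$ over $\lambda\ge 0$ gives $\lambda=C^{-1}\log(1+Cd/v)$ and the Bennett-type bound $\mathbb{P}(E)\le\exp\!\big(-\tfrac{v}{C^2}\,h(Cd/v)\big)$ with $h(u):=(1+u)\log(1+u)-u$; since $h'(u)=\log(1+u)$, a short comparison of derivatives gives $h(u)\ge u^2/\big(2(1+u)\big)$ for $u\ge 0$, which turns this into the claimed $\exp\!\big(-d^2/(2(v+Cd))\big)$.

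The main obstacle is the one flagged above: the passage from $\mathbb{E}[(\Delta X_i)^2\mid\mathcal{F}_i]$ to $\mathrm{Var}[\Delta X_i\mid\mathcal{F}_i]$, whose difference is $\big(\mathbb{E}[\Delta X_i\mid\mathcal{F}_i]\big)^2$. In the martingale case this difference is $0$ and there is nothing to do; for a genuine \emph{super}martingale it must be absorbed using the favourable (non-positive) sign of the drift rather than simply discarded, which is exactly the refinement \cite{F75} carries out. Everything else in the argument is the elementary optional-stopping-and-optimization computation sketched above.
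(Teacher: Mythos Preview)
The paper does not prove this lemma at all; it simply attributes it to Freedman and cites \cite{F75}. Your proposal therefore goes well beyond what the paper does, supplying the standard exponential-supermartingale argument that underlies Freedman's inequality.

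Your sketch is correct in outline and captures the real content of the proof: the scalar bound $e^{\lambda x}\le 1+\lambda x+\phi(\lambda)x^2$ for $x\le C$, the exponential process $W_i$, optional stopping plus Markov, and the Bennett-to-Bernstein optimization via $h(u)\ge u^2/(2(1+u))$. The one point you flag---passing from the conditional second moment $\E[(\Delta X_i)^2\mid\mathcal{F}_i]$ to the conditional variance when the drift is merely non-positive rather than zero---is indeed the only genuine subtlety, and you are right that it cannot be handled by simply discarding the $(\E[\Delta X_i\mid\mathcal{F}_i])^2$ term (which goes the wrong way). Your diagnosis that the favourable sign of the drift must be used to absorb this extra term, and that this is where the work in \cite{F75} (and its supermartingale variants) lies, is accurate. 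So your proposal is a faithful and honest account of the proof, with the delicate step correctly isolated rather than glossed over; the paper itself offers nothing to compare against beyond the citation.
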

Referring to $\eqref{zetatable}$, before $T$ we can put 
\begin{align*}
Var[\Delta \zeta^+_j (i) | \mathcal{F}_i] &= Var[ \Delta \zeta (i) | \mathcal{F}_i]\\
&\le E\sqbs{\of{\Delta\z(i)}^2 \mid \scr{F}_i}\\
&=1\cdot p_zp_y + 1\cdot p_y + 4\cdot(p_\z+p_zp_\z) + O\of{\frac{1}{M}}\\ 
&\le 3p_y
\end{align*}
and note that before $T$, we have
\begin{equation}\label{eq:pyub}
p_y = \frac{3Y}{2M} \le \frac{3A}{2B-A} \le \frac{3[n(1-t)^\frac{3}{2} + \a + f_a]}{4n(1-t) -2f_b - n(1-t)^\frac{3}{2} -\a -f_a} \le \of{1+ \frac{C_\a}{C_T^\frac{3}{2}} + o(1)}(1-t)^{\frac{1}{2}}
\end{equation}
so we will just say $p_y \le C_{p_y}(1-t)^{\frac{1}{2}}$ for some constant $C_{p_y}$ such that 
\begin{equation}\label{cond4}C_{p_y} > 1+ \frac{C_\a}{C_T^\frac{3}{2}}.\end{equation}
 Also, note that $\Delta \zeta^+ \le 2$.

Suppose the variable $\zeta$ triggers the stopping time $T$. Then there are steps $j < i=T$ such that  $\z>0$ all the way from step $j$ to step $i$, and 
$\zeta^+_{j}(i)>\ell_j(t) - h_j(t)$. We'll need to apply the lemma to the supermartingale $\zeta^+_j$ to show this event has low probability (guiding our 
choice for $\ell_j$). 
Note that in the lemma we can plug in the following for $v$:
\begin{align*}
V(i) &=\displaystyle \sum_{j \le k \le i} Var[\Delta \zeta^+_j (k)  | \mathcal{F}_{k}]\le 3 C_{p_y} \of{1-\frac{j}{n}}^{\frac{1}{2}}(i-j).
\end{align*}
So the unlikely event has probability at most

$$\exp\left\{ -  \displaystyle \frac{(\ell_j - h_j)^2}{ 2 \left[ 3 C_{p_y} \of{1-\frac{j}{n}}^{\frac{1}{2}}(i-j) + 2(\ell_j - h_j) \right]   }\right\}. $$

As there are $O\of{n^2}$ pairs of steps $j, i$ we'd like to make the above probability $o\of{\frac{1}{n^2}}$. Towards this end we consider 2 cases.

If $   \left(1-\frac{j}{n} \right)^\frac{1}{4}  \of{i-j}^\frac{1}{2} \le \log^\frac{1}{2} n$, then it suffices to put $\ell_j - h_j = C_\ell \log n$ as long as 
\begin{equation}\label{cond5}\frac{C_\ell^2}{ 6C_{p_y} + 4C_\ell} >2.\end{equation}

If $   \left(1-\frac{j}{n} \right)^\frac{1}{4}  \of{i-j}^\frac{1}{2} > \log^\frac{1}{2} n$, then it suffices to put $\ell_j - h_j = C_\ell \left(1-\frac{j}{n} \right)^\frac{1}{4}  \of{i-j}^\frac{1}{2}\log^\frac{1}{2} n $. Thus we choose
$$\ell_j(t) :=  h_j(t) + C_\ell \max \left\{\log n,   \left(1-\frac{j}{n} \right)^\frac{1}{4}  \of{i-j}^\frac{1}{2}\log^\frac{1}{2} n \right\}. $$
With this choice,  w.h.p. $T$ is not triggered by $\zeta$.

\subsection{An upper bound on $\zeta$}

In this section we'll motivate our choice of the function $f_\z$.

\begin{lemma} \label{zetalem1}
W.h.p.  for all $j < n - 2 C_{x}^\frac{2}{5} n^{\frac{3}{5}} \log^\frac{2}{5} n $ such that $\zeta(j-1)=0$, we have
\begin{enumerate}
\item $\zeta(j')=0$ for some $j \le j' \le j + C_x \of{1-\frac{j}{n}}^{-\frac{3}{2}} \log n$, and

\item$\zeta(i) \le 2 C_\ell^2 \of{1-\frac{j}{n}}^{-\frac{1}{2}} \log n$ for all $j \le i \le j'-1$
\end{enumerate}
\end{lemma}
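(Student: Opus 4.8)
The plan is to deduce both claims \emph{deterministically} from the conclusion of Theorem~\ref{Ttheorem}, i.e.\ from the event $\cE:=\{\,T=n-C_T n^{7/15}\log^{6/5}n\,\}$, which holds w.h.p. On $\cE$, condition~\eqref{ezeta} holds at every step $i<T$: for all $j<i$ with $\zeta$ positive on $j,\dots,i$,
\[
\zeta(i)\ \le\ \zeta(j)\ -\ \sum_{j\le k<i}\Phi(k/n)\ +\ \ell_j(i/n).
\]
Two deterministic facts feed into this. First, by \eqref{eq:abounds} we have $(2/3)^{3/2}(1-t)^{3/2}\le a(t)\le(1-t)^{3/2}\le 1-t$, hence $2b-a=4(1-t)-a\in[3(1-t),4(1-t)]$, and therefore
\[
\Phi(t)=\frac{9a^2}{(2b-a)^2}\ \ge\ \frac{9\cdot(2/3)^3(1-t)^3}{16(1-t)^2}\ =\ \frac{1-t}{6}\qquad(0\le t\le1).
\]
Second, if $\zeta(j-1)=0$ then step $j-1$ is of type $1$ or $2$, and reading \eqref{1sczeta} with $\zeta=0$ gives $\Delta\zeta(j-1)\le 2$, so $\zeta(j)\le 2$.

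Fix $j<n-2C_x^{2/5}n^{3/5}\log^{2/5}n$ with $\zeta(j-1)=0$; write $\tau=j/n$ and $m=\lfloor C_x(1-\tau)^{-3/2}\log n\rfloor$ (which is $\ge 1$). If $\zeta(j)=0$ both claims hold with $j'=j$; otherwise $\zeta(j)\in\{1,2\}$ and we let $j'$ be the least index $\ge j$ with $\zeta(j')=0$, so $\zeta>0$ on $[j,j'-1]$. The restriction on $j$ gives $n(1-\tau)^{5/2}>2^{5/2}C_x\log n$, whence $m/n\le 2^{-5/2}(1-\tau)$ and, since $n^{3/5}\log^{2/5}n\gg n^{7/15}\log^{6/5}n$, also $j+m<T$ for $n$ large. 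Hence for any $i$ with $j\le i\le j+m$ on which $\zeta$ stays positive throughout $[j,i]$ we may apply \eqref{ezeta}; using $\Phi(k/n)\ge\tfrac16(1-k/n)\ge\tfrac16(1-2^{-5/2})(1-\tau)$ for $k\in[j,i]$, $\zeta(j)\le2$, and $h_j(i/n)=C_h(1-\tau)^{1/4}n^{-1/2}\log^{1/2}n\,(i-j)$, we get, with $s:=i-j$,
\[
\zeta(i)\ \le\ 2-\tfrac{1-2^{-5/2}}{6}\,s(1-\tau)+h_j(i/n)+C_\ell\max\braces{\log n,\ (1-\tau)^{1/4}s^{1/2}\log^{1/2}n}.
\]
Because $n(1-\tau)^{3/2}\ge 2^{5/2}C_x\log n$, the term $h_j(i/n)$ is at most a tiny fixed fraction of $\tfrac{1-2^{-5/2}}{6}s(1-\tau)$ once $C_x$ is large compared with $C_h^2$; absorbing it leaves
\[
\zeta(i)\ \le\ 2-c_0\,s(1-\tau)+C_\ell\max\braces{\log n,\ (1-\tau)^{1/4}s^{1/2}\log^{1/2}n}
\]
for a constant $c_0$ slightly below $1/6$; in particular $c_0>1/8$.

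For claim~(1): if $j'>j+m$ then $\zeta>0$ throughout $[j,j+m]$, so the last display at $i=j+m$ (thus $s=m$, of order $C_x(1-\tau)^{-3/2}\log n$) pits a drift term of order $c_0 C_x(1-\tau)^{-1/2}\log n$ against a term at most $C_\ell C_x^{1/2}(1-\tau)^{-1/2}\log n$; taking $C_x$ large compared with $C_\ell^2$ makes the drift win, forcing $\zeta(j+m)<0$, a contradiction. Hence $j'\le j+m\le j+C_x(1-j/n)^{-3/2}\log n$. For claim~(2): for $j\le i\le j'-1$ the case $i=j$ follows from $\zeta(j)\le 2$, and for $i>j$ claim~(1) gives $s=i-j<C_x(1-\tau)^{-3/2}\log n$, so the last display applies; its right-hand side is at most $2+C_\ell\log n$ when $(1-\tau)^{1/4}s^{1/2}\log^{1/2}n\le\log n$ and otherwise at most
\[
2+\max_{s\ge0}\braces{-c_0 s(1-\tau)+C_\ell(1-\tau)^{1/4}s^{1/2}\log^{1/2}n}\ =\ 2+\frac{C_\ell^2\log n}{4c_0\,(1-\tau)^{1/2}}.
\]
Since $\tfrac1{4c_0}<2$ and $(1-\tau)^{-1/2}\log n\to\infty$, in either case this is at most $2C_\ell^2(1-j/n)^{-1/2}\log n$ for $n$ large. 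Everything from the third sentence onward used only the event $\cE$, so the lemma follows from Theorem~\ref{Ttheorem}.

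The crux is the balance in the last two displays: the downward drift $\sum\Phi\approx\tfrac16 s(1-\tau)$ and the error $\ell_j\approx C_\ell(1-\tau)^{1/4}s^{1/2}\log^{1/2}n$ both grow with the window length $s$ and cross over precisely at $s\approx(1-\tau)^{-3/2}\log n$, the window length in claim~(1), with crossover value $\approx(1-\tau)^{-1/2}\log n$, the bound in claim~(2) — so the statement is internally consistent, but it requires (i) keeping enough slack in the constant $1/6$ that $1/(4c_0)<2$; (ii) absorbing the auxiliary term $h_j$ into the drift, which costs only a negligible fraction of it because $n(1-\tau)^{3/2}\gg\log n$ throughout the admissible range of $j$; and (iii) choosing $C_x$ large compared with $C_\ell^2$ (and $C_h^2$) so that the linear drift genuinely dominates the $s^{1/2}$ term at $s=m$. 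These become further conditions on the absolute constants, of the kind collected in Section~\ref{sec:kappavalues}. Finally, the hypothesis $j<n-2C_x^{2/5}n^{3/5}\log^{2/5}n$ is imposed exactly so that the whole window $[j,j+m]$ lies below $T$, where \eqref{ezeta} is available.
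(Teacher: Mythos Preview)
Your argument is correct and follows essentially the same route as the paper's proof: both deduce the lemma deterministically from condition~\eqref{ezeta} on the event $\{T=n-C_Tn^{7/15}\log^{6/5}n\}$, by bounding the drift sum $\sum\Phi$ from below, comparing it with the error $\ell_j$, showing the drift wins at window length $\approx C_x(1-\tau)^{-3/2}\log n$ for part~(1), and maximizing the residual for part~(2). The differences are only technical: you derive $\Phi(t)\ge(1-t)/6$ directly from~\eqref{eq:abounds} (the paper instead shows $\Phi(t)/(1-t)$ is decreasing with limit $1/6$), and you lower-bound $\sum_{j\le k<i}\Phi(k/n)$ by the constant $\tfrac{1-2^{-5/2}}{6}(1-\tau)s$ over the short window, whereas the paper uses the arithmetic-series bound $-\tfrac1{12}nx^2+\tfrac16n(1-\tau)x$; both suffice. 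One small wrinkle: your remark that $h_j$ is ``a tiny fixed fraction'' of the drift ``once $C_x$ is large compared with $C_h^2$'' is slightly off---the ratio $h_j/(c_0s(1-\tau))$ is actually $O(n^{-1/5}\log^{1/5}n)\to0$ regardless of the constants, so the absorption goes through for all large $n$ with $c_0$ arbitrarily close to $(1-2^{-5/2})/6\approx0.137>1/8$, as you need.
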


\begin{proof}
Suppose $\zeta(j-1) = 0$. Note that we then have $\zeta(j) \le 2$. $\Phi(t)/(1-t)$ is decreasing since
\begin{align*}
 \frac{d}{dt}\of{\frac{\Phi(t)}{1-t}} &= 2\of{\frac{3a}{2b-a}}\of{\frac{(2b-a)\cdot3\of{-\frac{6a}{2b-a}} - 3a\of{-4+\frac{6a}{2b-a}}}{(2b-a)^2(1-t)}} + 
(1-t)^{-2}\of{\frac{3a}{2b-a}}^2\\
&=-\frac{9a^3(8b-a)}{(1-t)^2(2b-a)^4} \le 0.
\end{align*}
Also, using $\eqref{eq:alimit}$ and the defintion of $b$,
\[\lim_{t\rightarrow 1^-}\frac{\Phi(t)}{1-t} = \frac{1}{6}.\]
Hence $\Phi(t) \ge \frac{1}{6} \of{1-t}$ for all $0\le t\le 1$. If we substitute $x = \frac{i-j}{n}$ then

$$\displaystyle \sum_{j \le k < i} \Phi\left(\frac{k}{n} \right) \ge \frac{1}{6}\of{1-\frac{i+j-1}{2n}}(i-j) \ge -\frac{1}{12}nx^2 + \frac{1}{6}n
\of{1-\frac{j}{n}}x. $$ Plugging in the value of $ \ell_{j}(t)$, we have that for any $i \ge j$ such that $\z(j) \ldots \z(i)$ are all positive,
\begin{align} 
 \zeta(i) &\le  \zeta(j) - \displaystyle \sum_{j \le k < i} \Phi\left(\frac{k}{n} \right)+ \ell_j(t) \\
 &\le \frac{1}{12}nx^2 - \left[\frac{1}{6}n\of{1-\frac{j}{n}} - C_h  n^\frac{1}{2} \log^\frac{1}{2} n \of{1 - \frac{j}{n} }^\frac{1}{4} \right] 
x\label{zetabound}\\
 &\quad +C_\ell \max \left\{\log n,   \left(1-\frac{j}{n} \right)^\frac{1}{4}  n^\frac{1}{2}\log^\frac{1}{2} n x^\frac{1}{2}\right\}  + 2. \nonumber
 \end{align}
Consider $\eqref{zetabound}$ for $x=x_j:= C_x n^{-1} \log n \of{1-\frac{j}{n}}^{-\frac{3}{2}}$. As long as $C_x>1$ and $j < n - 2 C_{x}^\frac{2}{5} n^{\frac{3}{5}} \log^\frac{2}{5} n $, we have
$$ \left(1-\frac{j}{n} \right)^\frac{1}{4} n^\frac{1}{2}\log^\frac{1}{2} n x_j^\frac{1}{2}> \log n $$
so we can evaluate the ``max'' in $\ell_j$.
Also note that the coefficient of $x$ is dominated by $-\frac{1}{6}n\of{1-\frac{j}{n}}$, so the coefficient of $x$ is at most, say $-\frac{1}{7}n\of{1-\frac{j}{n}}$.
Thus $\eqref{zetabound}$ gives
\begin{align*}
 \zeta(j+nx_j) \le \frac{C_x^2}{12} n^{-1} \log^2 n \of{1-\frac{j}{n}}^{-3} - \of{\frac{C_x}{7} - C_\ell  \sqrt{C_x} } \log n 
\of{1-\frac{j}{n}}^{-\frac{1}{2}} +2 
 \end{align*}
which is negative for this range of $j$ as long as we pick $C_x$ such that 
\begin{equation}\label{cond6} \frac{C_x}{7} - C_\ell  \sqrt{C_x} >0.\end{equation} 
Therefore, $\zeta$ must have hit $0$ again before step $i=j+nx_j$. This proves the first part of the lemma.

To prove the second part, consider $\eqref{zetabound}$ for $j < i < j+nx_j$ (i.e. for $0<x<x_j$). If $x \le n^{-1} \log n \of{1-\frac{j}{n}}^{-\frac{1}{2}}$ then we can put
\begin{align*}
 \zeta(i)  &\le \frac{1}{12}nx^2 - \frac{1}{7}n\of{1-\frac{j}{n}}  x +C_\ell \log n < 2 C_\ell \log n
 \end{align*}
 and for $x$ larger than that, we'll put
 \begin{align*}
 \zeta(i) &\le \frac{1}{12}nx^2 - \frac{1}{7}n\of{1-\frac{j}{n}} x +C_\ell \left(1-\frac{j}{n} \right)^\frac{1}{4} n^\frac{1}{2}\log^\frac{1}{2} n x^\frac{1}{2} \\
 & \le \frac{C_x^2}{12} n^{-1} \log^2 n \of{1-\frac{j}{n}}^{-3} + \frac{7C_\ell^2}{4} \of{1-\frac{j}{n}}^{-\frac{1}{2}} \log n\\
 &< 2 C_\ell^2 \of{1-\frac{j}{n}}^{-\frac{1}{2}} \log n.
 \end{align*}
where to justify the second line we use the inequality $c\sqrt{x} - dx \le \frac{c^2}{4d}$ for real numbers $x,c,d>0.$ 
\end{proof}
We would also like to say something about $\z(i)$ for $i > n - 2 C_{x}^\frac{2}{5} n^{\frac{3}{5}} \log^\frac{2}{5} n$.
\begin{lemma}
There exists a constant $C_\z$ such that w.h.p. for all $i \le T$ we have $\z(i) \le C_\z n^\frac{1}{5} \log^\frac{4}{5} n. $
\end{lemma}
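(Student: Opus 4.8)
The plan is to obtain the bound from Lemma~\ref{zetalem1} on the bulk of the process and to add one short argument for the last $O(n^{3/5}\log^{2/5}n)$ steps. Fix $i\le T$ with $\zeta(i)>0$ (the case $\zeta(i)=0$ being vacuous), let $j_0$ be the largest step below $i$ with $\zeta(j_0)=0$ (which exists since $\zeta(0)=0$), and set $j:=j_0+1$, so that $\zeta(j-1)=0$, $\zeta(j)\le 2$ (as $\zeta(j-1)=0$), and $\zeta$ is positive on steps $j,\dots,i$. I would then split according to whether $j<n-2C_x^{2/5}n^{3/5}\log^{2/5}n$ or not; note the first alternative automatically covers every $i$ below that threshold, since $j\le i$.

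In the first (``early'') case I would invoke Lemma~\ref{zetalem1}: part~(1) applied at $j$ produces a step $j'$ with $\zeta(j')=0$ and $j\le j'\le j+C_x(1-j/n)^{-3/2}\log n$, and because $j_0$ is the last zero of $\zeta$ strictly below $i$ we must have $j'>i$; hence $j\le i\le j'-1$, so part~(2) applies to our $i$ and gives $\zeta(i)\le 2C_\ell^2(1-j/n)^{-1/2}\log n$. Since $j<n-2C_x^{2/5}n^{3/5}\log^{2/5}n$ forces $1-j/n>2C_x^{2/5}n^{-2/5}\log^{2/5}n$, this is $O(n^{1/5}\log^{4/5}n)$.

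For the second (``late'') case, $n-2C_x^{2/5}n^{3/5}\log^{2/5}n\le j\le i\le T$, I would go straight to the defining inequality~\eqref{ezeta}, applied at step $i$ with this $j$ (if $j=i$ there is nothing to do, as $\zeta(i)\le 2$). Since $\Phi=\frac{9a^2}{(2b-a)^2}\ge 0$, \eqref{ezeta} gives $\zeta(i)\le\zeta(j)+\ell_j(i/n)\le 2+\ell_j(i/n)$, so the task reduces to bounding $\ell_j(i/n)=h_j(i/n)+C_\ell\max\{\log n,(1-j/n)^{1/4}(i-j)^{1/2}\log^{1/2}n\}$. Writing $\delta:=1-j/n\le 2C_x^{2/5}n^{-2/5}\log^{2/5}n$ and using the trivial bound $i-j\le n-j=n\delta$, one gets $h_j(i/n)=C_h\delta^{1/4}n^{-1/2}\log^{1/2}n\,(i-j)\le C_h n^{1/2}\log^{1/2}n\,\delta^{5/4}=O(\log n)$ and $(1-j/n)^{1/4}(i-j)^{1/2}\log^{1/2}n\le n^{1/2}\delta^{3/4}\log^{1/2}n=O(n^{1/5}\log^{4/5}n)$, so $\zeta(i)=O(n^{1/5}\log^{4/5}n)$ here as well. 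Choosing $C_\z$ equal to the larger of the two resulting absolute constants then proves the lemma.

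Every displayed estimate is a one-line computation once the relevant quantities are substituted, so I expect no analytic difficulty. The one thing that needs a little care is the regime bookkeeping: deciding on which side of the threshold the ``preceding zero'' $j=j_0+1$ lies, and checking the interplay of $j'$, $T$ and $i$ so that Lemma~\ref{zetalem1}(2) genuinely applies to our chosen $i$ (i.e.\ that $i\le j'-1$). I expect this case-matching, rather than any single inequality, to be the main (if minor) obstacle.
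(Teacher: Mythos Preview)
Your proposal is correct and follows essentially the same approach as the paper: split according to where the last zero of $\zeta$ lies relative to the threshold $n-2C_x^{2/5}n^{3/5}\log^{2/5}n$, invoke Lemma~\ref{zetalem1} in the early regime, and in the late regime bound $\zeta(i)$ via \eqref{ezeta} using $i-j\le n-j$. The only cosmetic difference is that in the late case the paper reuses the intermediate inequality \eqref{zetabound} (retaining the $\sum\Phi$ contribution and then bounding the resulting $\frac{1}{12}nx^2$ term), whereas you discard $\sum_{j\le k<i}\Phi(k/n)\ge 0$ outright and bound $h_j$ separately; both routes give the same $O(n^{1/5}\log^{4/5}n)$.
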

\begin{proof}
Suppose step $j' \ge n - 2 C_{x}^\frac{2}{5} n^{\frac{3}{5}} \log^\frac{2}{5} n$ with $\z(j')=0$. It follows from Lemma \ref{zetalem1} that w.h.p. such a $j'$ exists.
 Let $i \ge j'$  such that $\z(j') \ldots \z(i)$ are all positive. Note that we again have the bound $\eqref{zetabound}$. 
But now $0 \le x \le \frac{n-j'}{n} \le 2 C_{x}^\frac{2}{5} n^{-\frac{2}{5}} \log^\frac{2}{5} n$, and $\eqref{zetabound}$ gives
$\z(i) \le \of{\frac{1}{3}C_x^\frac{4}{5} + 2^\frac{3}{4} C_\ell C_x^\frac{3}{10}}   n^\frac{1}{5} \log^\frac{4}{5} n.$
\end{proof}

So in particular we can say that for $i \le T$ we have $$\z(i) \le f_\zeta (t) = C_\z \min \left\{ (1-t)^{-\frac{1}{2}} \log n, n^\frac{1}{5} \log^\frac{4}{5} n \right\},$$ where 
\begin{equation}\label{cond7}C_\z > \max \left\{ 2C_\ell^2, \frac{1}{3}C_x^\frac{4}{5} + 2^\frac{3}{4} C_\ell C_x^\frac{3}{10} \right\}\end{equation}

\subsection{$T$ is not triggered by $B$}\label{sec:notB}
 
 Recall from $\eqref{eq:Btrajectory}$ that $$e_b(i) = \sum_{j \le i} \of{1_{\s(i)=loop} - 1_{\delta(j)=\zeta} }.$$
 First we'll bound $ \displaystyle \sum_{j\le i}  1_{\delta(j) = \zeta}$.
 Define $B^-(i) := -\displaystyle \sum_{j\le i} 1_{\delta(j) = \zeta} + \frac{1}{2}f_b(t)$. Then 
 \begin{align*}
E[ \Delta B^- (i)| \mathcal{F}_i ] &= -\frac{2Z}{2M} \cdot \frac{\zeta}{2M-2} + \frac{1}{2n} f_b ' (t) + O \of{\frac{1}{n^2} f_b '' (t) }\\
& \ge- \frac{ f_\zeta}{n(2b-a)} + \frac{1}{2n} f_b ' (t) + O \of{\frac{1}{n^2} f_b '' (t) }.\\
\end{align*}
Note that by $\eqref{eq:abounds}$, $$3(1-t) \le 2b-a \le 4(1-t),$$
so we can put 
\begin{equation}
f_b = C_B \cdot \begin{cases}
 (1-t)^{-\frac{1}{2}} \log n&: 1-t > n^{-\frac{2}{5}} \log^\frac{2}{5} n \\
-n^\frac{1}{5} \log^\frac{4}{5} n \log(1-t)&: \textrm{otherwise} \\
\end{cases}
\end{equation}
 and $B^-$ will be a submartingale as long as 
\begin{equation}\label{cond8}C_B > \frac{4}{3} C_\z.\end{equation} 
We'll apply Lemma \ref{lem:Freedman} to $-B^-$. Note that before $T$ we can put 
\begin{align*}
Var[\Delta B^- (i) | \mathcal{F}_i] &= Var[ 1_{\delta(i) = \zeta} | \mathcal{F}_i]\\
&\le p_\zeta \le  \frac{f_\z}{4n(1-t) - na -\a -f_a - f_b} \le \frac{f_\z}{3n(1-t)}
\end{align*}
and therefore, referring to $V(i)$ as in 
Lemma \ref{lem:Freedman}, 
\begin{align*}
V(i) &\le\displaystyle \sum_{0 \le k \le i}  \frac{f_\z}{3n(1-t)}.
\end{align*}
So for $v$ we will plug in
\begin{equation}
v =  C_{v_B} \cdot 
\begin{cases}
 (1-t)^{-\frac{1}{2}} \log n&: 1-t > n^{-\frac{2}{5}} \log^\frac{2}{5} n \\
n^\frac{1}{5} \log^\frac{9}{5} n&: \textrm{otherwise}. \\
\end{cases}
\end{equation}
which is an upper bound on $V(i)$ as long as 
\begin{equation}\label{cond9}C_{v_B} \ge \frac{2}{3} C_\z\end{equation}
Note $\left|\Delta B^- \right| \le 1$, so the probability that $-B^-(i) > \frac{1}{2}f_b(t)$  is at most
$$\exp\left\{-\frac{\frac{1}{4} f_b^2}{2 \left[ v + \frac{1}{2}f_b \right]}   \right\} $$
which is $ o\of{\frac{1}{n}}$ as long as \begin{equation}\label{cond10}\frac{ \frac{1}{4} C_B^2}{2[C_{v_B} + \frac{1}{2} C_B]} > 1.\end{equation}
So w.h.p. for all $i \le T$, we have $$\displaystyle \sum_{j<i} 1_{\delta(j) = \zeta} \le f_b(t).$$
The sum $\displaystyle \sum_{j<i} 1_{\s(j)=loop}$ presents less difficulty, since w.h.p. the configuration has at most $C_B \log n$ loops total. So we can 
trivially say that 
$$\sum_{j<i} 1_{\s(j)=loop} \le f_b(t)$$
and hence w.h.p. the stopping time $T$ is not triggered by variable $B$.

\subsection{Values for the constants}\label{sec:kappavalues}
Throughout the proof above, we collect various constraints on the constants in \eqref{cond1}, \eqref{cond2}, \eqref{cond3}, \eqref{cond4}, \eqref{cond5}, \eqref{cond6}, \eqref{cond7}, \eqref{cond8}, \eqref{cond9} and \eqref{cond10}. The reader may chack that the following values satisfy all the conditions.
\[C_A=16,\quad C_h=20,\quad C_{p_y}=2,\quad C_\ell=12,\quad C_x=8000,\]
\[C_\z=800,\quad C_\a=70000,\quad C_{v_B}=700,\quad C_B=1200,\quad C_T=2000.\]
This completes the proof of Theorem \ref{Ttheorem}.

\section{Upper bound on the number of components}

In this section we prove the following lemma which provides the upper bound for the proof of Theorem \ref{thm:main}: 

\begin{lemma}
W.h.p. the algorithm outputs a $2$-matching with $O\of{n^\frac{1}{5} \log^\frac{9}{5} n}$ components.
\end{lemma}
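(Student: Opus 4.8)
The plan is to decompose the components of the final 2-matching $U$ into two types according to how they arise in the algorithm, and bound the count of each type separately, using the results already established in Theorem \ref{Ttheorem} together with an analysis of the final "Step 3" phase. A component of $U$ is created exactly when a path can no longer be extended at both ends; as explained in Section \ref{sec:expbehavior}, this happens at steps $i$ with $\delta(i) = \zeta$ (a deletion move picks a $Y_1$, $Y_2$, or $Z_1$ vertex, permanently capping one end of a path), together with the events $\sigma(i) = loop$ and $\sigma(i) = multi$ which also close off or isolate components. Hence, up to the contribution of Step 3, the number of components of $U$ is at most $\sum_{i < T} \one_{\delta(i) = \zeta} + \sum_{i<T}\one_{\sigma(i) \in \{loop,multi\}} + (\text{components from the tail } T < i, \text{ and from Step 3})$.

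First I would bound $\sum_{i<T}\one_{\delta(i)=\zeta}$. This is precisely the quantity controlled in Section \ref{sec:notB}: there we showed that w.h.p. for all $i \le T$, $\sum_{j<i}\one_{\delta(j)=\zeta} \le f_b(t)$, and since $T = n - C_T n^{7/15}\log^{6/5} n$, at the stopping time $1-t = \Theta(n^{-8/15}\log^{6/5}n) < n^{-2/5}\log^{2/5}n$, so the relevant branch of $f_b$ gives the bound $O(n^{1/5}\log^{9/5}n)$. The loop/multi contributions are lower order: w.h.p. the configuration model produces only $O(\log n)$ loops and $O(\log n)$ multiple edges total (a standard first-moment/Poisson-approximation fact for the configuration model on a cubic graph), so $\sum_{i<T}\one_{\sigma(i)\in\{loop,multi\}} = O(\log n)$, which is absorbed.

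Next I would handle everything that happens after step $T$. Between step $T$ and the start of Step 3 (or the end of the process), the number of remaining steps is $O(n - T) = O(n^{7/15}\log^{6/5}n)$, and each such step can create at most a constant number of new components, so this contributes $O(n^{7/15}\log^{6/5}n) = o(n^{1/5}\log^{9/5}n)$ — comfortably within the target. Finally, when Step 3 is reached the remaining graph $\Gamma$ is $2$-regular on $|Z|$ vertices, hence a disjoint union of cycles; placing a maximum matching on a union of cycles creates at most one component per cycle, so the number of components contributed by Step 3 is at most the number of cycles of $\Gamma$ at that moment. One needs $|Z| = O(n^{1/5}\log^{9/5}n)$ at time $T$, which follows from the concentration results: $Z = B - 2A + \zeta$, and at time $T$ we have $A = na(t)(1+o(1)) = \Theta(n(1-t)^{3/2})$, $B = nb(t)(1+o(1)) = \Theta(n(1-t))$, so $B - 2A$ is of order $n(1-t) = O(n^{7/15}\log^{6/5}n)$, and $\zeta \le f_\zeta = O(n^{1/5}\log^{4/5}n)$ — actually this gives $Z = O(n^{7/15}\log^{6/5}n)$, which again is $o(n^{1/5}\log^{9/5}n)$ only if $7/15 < 1/5$, which is false, so I would instead continue running the main analysis past the nominal $T$ (or re-examine the phase $n^{1/5} < 1-t < n^{7/15}$ more carefully) to get $|Z|$ down to $O(n^{1/5}\log^{c}n)$ before invoking Step 3; the number of cycles is at most $|Z|/3$ since each cycle has length at least $3$ (in a simple graph) or handled directly via loops/double-edges.

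The main obstacle I anticipate is the last point: controlling the number of components created in the final $\Theta(n^{7/15}\log^{6/5}n)$ steps and in Step 3, since the crude "constant per step" bound of $O(n^{7/15}\log^{6/5}n)$ exceeds the target $O(n^{1/5}\log^{9/5}n)$. To close this gap I would need to show that in the tail phase the variable $\zeta$ (equivalently the number of $Y_1,Y_2,Z_1$ vertices) and $Z$ itself shrink fast enough that the number of actual component-creating events ($\delta(i)=\zeta$, or cycles closed) in the tail is still only $O(n^{1/5}\log^{9/5}n)$ — this is exactly the regime where the heuristic integral $\int \min\{(1-t)^{-3/2}, n^{1/5}(1-t)^{-1}\}\,dt$ is dominated by its upper end and contributes $\Theta(n^{1/5}\log n)$, so the bound is tight and the argument must track the process essentially all the way to $1-t = \Theta(1/n)$, not just to $T$. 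I would therefore extend the supermartingale/Freedman analysis of $\sum_{j}\one_{\delta(j)=\zeta}$ (as in Section \ref{sec:notB}) through this tail, using $f_\zeta$'s second branch and the cruder bounds on $A,B$ available there, to conclude the total is $O(n^{1/5}\log^{9/5}n)$ w.h.p.
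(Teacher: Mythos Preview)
Your pre-$T$ bound is correct and matches the paper exactly: the work in Section~\ref{sec:notB} already gives $\sum_{j<T}\one_{\delta(j)=\zeta}=O(n^{1/5}\log^{9/5}n)$ w.h.p.

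The gap is in the tail. Your crude ``constant per step'' bound gives $O(n^{7/15}\log^{6/5}n)$, which you correctly flag as too large, and your proposed fix (extend the Freedman analysis of $\sum_j\one_{\delta(j)=\zeta}$ past $T$) is problematic because after $T$ you no longer have the concentration of $A,B,\zeta$ needed to control $p_\zeta$ in the variance and drift estimates. The paper avoids this entirely with a \emph{deterministic} argument you are missing: first, $A(T)=O(n^{1/5}\log^{9/5}n)$, since $1-T/n=\Theta(n^{-8/15}\log^{6/5}n)$ gives $na(T/n)=O(n(1-T/n)^{3/2})=O(n^{1/5}\log^{9/5}n)$, while $f_a(T/n)$ and $\alpha(T)$ are also $O(n^{1/5}\log^{9/5}n)$ by \eqref{eq:alphabound}. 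Second, inspect \eqref{1scA}: $\Delta A(i)\le 0$ always, and on every step with $\delta(i)=\zeta$ (which forces $\sigma(i)\in\{Z,multi\}$) we have $\Delta A(i)\le -2$. Hence the number of steps $i\ge T$ with $\delta(i)=\zeta$ before $A$ hits $0$ is at most $A(T)/2=O(n^{1/5}\log^{9/5}n)$, with no probabilistic input needed.

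Your Step~3 and cycle bounds are also looser than necessary. Once $A=0$ the remaining graph is a \emph{random} $2$-regular graph, which has $O(\log n)$ components w.h.p.\ regardless of how large $|Z|$ is; you do not need $|Z|$ to be small. Separately, the number of cycles ever closed by the algorithm is handled by observing that at each step the chance of closing a cycle is at most $1/(2M-1)$, so the total is stochastically dominated by $\sum_{j=1}^{3n}\mathrm{Bernoulli}(1/j)$, which is $O(\log n)$ w.h.p.
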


\begin{proof}

The components of our $2$-matching at any step $i$ consist of cycles and paths (including paths of length $0$). First we'll bound the number of paths in the 
final $2$-matching. Note that these final paths have both endpoints in $Z_0$, meaning that each endpoint had a half-edge in $\z$ that got deleted (or for
 paths of length $0$ there is only one vertex which is in $Y_0$). So to bound the number of these paths, we bound the sum $\displaystyle \sum_{j}  
1_{\delta(j) = \zeta}$.
Note that in light of Section \ref{sec:notB}, we have the bound 

$$\displaystyle \sum_{j<T}  1_{\delta(j) = \zeta} = O\of{n^\frac{1}{5} \log^\frac{9}{5} n }.$$

Next we'll bound the terms corresponding to steps after $T$, but before $A=0$. By Theorem \ref{Ttheorem} we have w.h.p.
$$A(T) = O\of{n^\frac{1}{5} \log^\frac{9}{5} n }$$
since \[0 \le \a(T) =  O\of{n^\frac{1}{5} \log^\frac{9}{5} n}\]
by $\eqref{eq:alphabound}$, and \[na\of{\frac{T}{n}},\,f_a\of{\frac{T}{n}} = O\of{n^{1/5}\log^{9/5}n}.\]
Now note that by $\eqref{1scA}$, on each step $j$ such that $\s(j)\in \braces{Z,multi}$ and $\d(j) = \z$, the variable $A$ decreases by $2$. Also, the 
variable $A$ is nonincreasing. Therefore there can be at most $O\of{n^\frac{1}{5} \log^\frac{9}{5} n }$ such steps $j$ until $A=0$. 

Once we have $A=0$, the algorithm finds a maximum matching on the remaining random $2$-regular graph $\G$. Thus, to complete the bound on the number of 
paths in the final $2$-matching, we'll bound the number of vertices in $\G$ that are unsaturated by the matching (i.e. the number of odd cycles in the 
remaining $2$-regular graph $\G$). But $\G$ has at most $O\of{\log n}$ cycles total, since it's a random $2$-regular graph. 
Thus, the sum $\displaystyle \sum_{j}  1_{\delta(j) = \zeta}$, and therefore the number of paths in the final $2$-matching, are 
$O\of{n^\frac{1}{5} \log^\frac{9}{5} n }.$

Now we bound the number of cycles in the final $2$-matching. Note that at any step, the probability of closing a cycle is at most $\frac{1}{2M-1}$. 
Therefore, the number of cycles created for the whole process is stochastically dominated by the random variable 
$$C:= \sum_{j=1}^{3n} c_j$$
where
\begin{equation}
c_j =  
\begin{cases}
 1&:  \mbox{with prob. } \frac{1}{j}\\
0&: \mbox{with prob. } \frac{j-1}{j}. \\
\end{cases}
\end{equation}
So if we define the martingale $$C(i) := \sum_{j=1}^{i} \of{c_j - \frac{1}{j}}$$
then we have $ Var[\Delta C(i)] = \frac{i-1}{i^2}$, and note $\sum_{i=1}^{3n} \frac{i-1}{i^2} = O(\log(3n))$.
Now, applying Lemma \ref{lem:Freedman} to $C(i)$ shows that w.h.p. it is always at most $O(\log^\frac{1}{2} n)$, and since $E[C] = O(\log n)$, we 
have that $C= O(\log n)$ w.h.p..

\end{proof}

\begin{section}{Lower bound on the number of components}
 In this section we will prove that near the end of the process, there is a non-zero probability that $\z$ becomes large and stays large for a significant 
amount of time. In this case, the algorithm will likely delete an edge adjacent to a $\z$ vertex. In particular, we will prove the following lemma which 
provides the lower bound and thus completes the proof of Theorem \ref{thm:main}:
\begin{lemma}
 W.h.p. the algorithm outputs a 2-matching with $\Omega\of{n^{\frac{1}{5}}\log^{-4}n}$ components.
\end{lemma}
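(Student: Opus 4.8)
The plan is to establish a matching lower bound to the upper bound from the previous section by showing that, with probability bounded below by a polynomially small quantity on each of many nearly independent ``attempts,'' the variable $\z$ rises to order $n^{1/5}/\mathrm{polylog}$ and remains positive long enough that a deletion move of type $\d(i)=\z$ occurs. Since the number of components of the final $2$-matching is at least $\sum_j \bfo_{\d(j)=\z}$ (up to the $O(\log n)$ correction from cycles), it suffices to produce $\Omega(n^{1/5}\log^{-4}n)$ such steps w.h.p. First I would restrict attention to the late window $n - C_x^{2/5} n^{3/5}\log^{2/5}n \le i \le T$, where $1-t$ is of order between $n^{-2/5}$ and $n^{-2/5}\log^{2/5}n$, so that the downward drift $\Phi(t) = \Theta(1-t)$ is of order $n^{-2/5}$ and the per-step variance of $\z$ is $\Theta(p_y) = \Theta((1-t)^{1/2}) = \Theta(n^{-1/5})$; this is exactly the regime identified in Section~\ref{sec:expbehavior} where a lazy random walk with these parameters can reach height $\Theta(n^{1/5})$ before returning to $0$.

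The core of the argument is a second-moment / anti-concentration estimate for the lazy random walk $X_\t$ approximating $\z$: starting from a step $j$ with $\z(j)=0$, over a block of $s = \Theta((1-\t)^{-3/2}) = \Theta(n^{3/5})$ steps, the walk $X_\t(s)$ has mean $-\Theta((1-\t)s) = -\Theta(n^{1/5})$ and standard deviation $\Theta((1-\t)^{1/4}s^{1/2}) = \Theta(n^{1/5})$, so with constant probability it stays positive for a sub-block of length $\Omega(n^{3/5})$ and reaches height $\Omega(n^{1/5})$. During such an excursion, at each step $\P[\d(i)=\z \mid \mathcal F_i] = p_z p_\z + O(1/M) = \Theta\of{(1-t)\cdot \z/(2M)} = \Theta\of{n^{-1} \cdot n^{1/5}/(1-t)}$, which over a block of $\Theta(n^{3/5})$ steps gives an expected number of $\d=\z$ events of order $\Theta\of{n^{3/5}\cdot n^{-1}\cdot n^{1/5}/n^{-2/5}} = \Theta\of{n^{1/5}}$ per successful excursion. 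I would then partition the late window into $\Theta(\log n)$ disjoint blocks, use a martingale (Azuma or Freedman) coupled with the lower tail to show each block independently has at least a constant probability of a successful excursion, and extract, via a concentration bound on the number of successful blocks (or simply via the union bound on the complementary rare event), that w.h.p. at least one — hence, after accounting for the expected yield per excursion via another application of Freedman's inequality to the counting martingale $\sum \bfo_{\d(j)=\z}$ — produces $\Omega(n^{1/5}\log^{-4}n)$ deletion-of-$\z$ steps.

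The main obstacle will be controlling the lower-tail / anti-concentration of $\z$ rigorously: the dynamic-concentration machinery of Section~\ref{sec:kappavalues} only gives upper bounds on $\z$, whereas here I need a genuine lower bound that $\z$ does \emph{not} die out too quickly and does climb high enough. This requires a careful coupling of $\z$ with the idealized lazy walk $X_\t$ that is valid on the event $\{\z>0\}$ throughout the block, together with a Paley--Zygmund-type argument (second moment of $\z(i)$, using $\Var[\Delta\z(i)\mid \mathcal F_i] = \Theta(p_y)$ established in Section~\ref{sec:expbehavior}) to guarantee the excursion reaches the desired height with non-negligible probability; one must also check that the error terms in \eqref{zetatable} and the drift estimate \eqref{E1sczeta} — which are $O(p_\z)$ and hence genuinely smaller than the main $-\Phi$ term only when $\z = o((1-t)\cdot 2M) = o(n^{3/5})$ — remain under control throughout, which they do since $\z \le f_\zeta(t) = O(n^{1/5}\log^{4/5}n)$ by the previous subsection. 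The polylogarithmic loss in $n^{1/5}\log^{-4}n$ (versus the heuristic $n^{1/5}$) is the slack absorbed by these couplings and by requiring the excursion to clear a height threshold with a safety factor.
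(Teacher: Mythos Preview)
Your strategy matches the paper's proof almost exactly: restrict to a late window where $1-t=\Theta(n^{-2/5})$, couple $\Delta\zeta$ from below with independent increments, partition into $\Theta(\log n)$ blocks, argue each block has constant probability of pushing $\zeta$ up to order $n^{1/5}/\mathrm{polylog}$, then show $\zeta$ stays high for $\Theta(n^{3/5}\log^{-3}n)$ further steps and count the resulting $\delta=\zeta$ events via a binomial/Chernoff bound. The only substantive weak point is the tool you name for the anti-concentration step. Neither Azuma/Freedman's inequality (which bounds tail probabilities from \emph{above}) nor a Paley--Zygmund argument (which needs a nonnegative variable with positive mean, whereas the centred block sum has mean zero and the uncentred sum of $X(i)$ has \emph{negative} mean) gives you a constant lower bound on $\P[\text{block sum}\ge c\,n^{1/5}\log^{-1}n]$. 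The paper handles this with a martingale central limit theorem due to Freedman: since the block-sum martingale has bounded increments and conditional variance $\Theta(n^{2/5}\log^{-2}n)$, the rescaled sum converges to $\mathcal N(0,1)$, which immediately yields the constant $p_0$. With that tool substituted in, your outline is the paper's proof.
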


\begin{proof}
We show that $\z$ stochastically dominates a suitably defined martingale and then apply the following central limit theorem of Freedman.

\begin{lemma}\label{FreedmanCLT}
 Let $S_i$ be a martingale adapted to the filtration $\scr{F}_i$ with $X_i := S_{i} - S_{i-1}$, $\abs{X_i} \le C$ for some constant $C$, and let 
$V_i := \sum_{k\le i}Var\sqbs{X_k | \scr{F}_{k-1}}$. For each $n$, let $0 < \g_n < \g'_n$ be real numbers, and let $\s_n$ be a stopping time. 
As $n\rightarrow\infty$, suppose $\g_n\rightarrow\infty$ and $\g'_n/\g_n\rightarrow 1$ and $\Pr{\g_n < V_{\s_n} < \g'_n}\rightarrow 1$.  
Then $S_{\s_n}/\sqrt{\g_n}$ converges in distribution to $\scr{N}(0,1).$
\end{lemma}

Let \[w(i)=\frac{3a(i/n)}{2b(i/n) - a(i/n)}.\]  In this section we will consider steps from $i_0 = n - n^{3/5}$ to $i_{end} = n- n^{3/5} + 
n^{3/5}\log^{-1}n \le n-\frac{1}{2}n^{3/5}$. From Theorem \ref{Ttheorem}, w.h.p., $T$ occurs after this time frame. Hence we have dynamic 
concentration on our variables and can say in this range,
\begin{align}
 p_y(i) &= w(i) - O(n^{-2/5}\log n) \\
 p_\z(i) &= O(n^{-2/5}\log n) \\
 p_z(i) &= 1 - p_y(i) - p_\z(i).
\end{align}
Note that in this range we also have $w(i) = \Theta(n^{-1/5})$.
Our martingale will have independent increments given by
\begin{equation}
 X(i)=
\begin{cases}
1&\mbox{with prob. } w(i) - Ln^{-2/5}\log n\\
0&\mbox{with prob. } 1 - 2w(i) - Ln^{-2/5}\log n\\
-1&\mbox{with prob. } w(i) - Ln^{-2/5}log n\\
-2&\mbox{with prob. } 3Ln^{-2/5}\log n
\end{cases}
\end{equation}
where $L$ is a positive contstant large enough that for all $i_0\le i \le i_{end}$
\[p_z(i)p_y(i) \ge w(i) - Ln^{-2/5}\log n,\quad p_z(i)p_y(i) + p_z(i)^2 \ge 1- w(i) - 2Ln^{-2/5}\log n\]
and
\[p_z(i)p_y(i)+p_z(i)^2 +p_y(i) \ge 1-3Ln^{-2/5}\log n.\]
In this case, $\D\zeta(i)$ stochastically dominates $X(i)$. This follows from (\ref{zetatable}) in the case when $\z>0$ and trivially when $\z=0$. 

For any $i_0 < i \le i_{end}$ we have 
\[E[X(i) | \scr{F}_{i-1}] = -6Ln^{-2/5}\log n\]
and \[Var[X(i) | \scr{F}_{i-1}] = 2w(i) + 4Ln^{-2/5}\log n.\] 

We will split the time range $i_0$ to $i_{end}$ into $d=\log n$ many chunks of length $n^{3/5}\log^{-2} n$.
Recall that $i_0 = n-n^{3/5}$ and for all $1\le\ell\le d$ define
\[i_\ell = i_{\ell-1} + n^{3/5}\log^{-2}n.\]

For $0\le \ell < d$, we define a martingale starting at $i_\ell$ to be
\[S_{\ell}(k) = \sum_{i=i_\ell+1}^{k}\of{X(i) - E[X(i) | \scr{F}_{i-1}]}.\]
Then for $0\le \ell < d$ we have
\[S_{\ell}(i_{\ell+1}) =\of{\sum_{i=i_{\ell}+1}^{i_{\ell+1}}X(i)} + 6Ln^{1/5}\log^{-1}n.\]
We also have that
\[V_{\ell}:=\sum_{i=i_{\ell}+1}^{i_{\ell+1}}Var[X(i)|\scr{F}_{i-1}] = \Theta\of{n^{2/5}\log^{-2} n}.\]
Further, using the fact that for  the expression for $V_{\ell}$ is completely deterministic, we may choose $\g(n,\ell)$ such that 
$\g(n,\ell)< V_{\ell} < \g(n,\ell) + o(\g(n,\ell))$. 
By using the facts about $a(t)$ and $b(t)$ presented in Section \ref{sec:expbehavior}, we may take $\g(n,\ell) = C_\ell n^{2/5}\log^{-2} n$ for come 
constant $C_\ell$. Note here that there is an absolute constant $c$ such that $C_\ell \le c$ for all $0\le \ell < d$. 

Hence applying Lemma \ref{FreedmanCLT} to $S_\ell$ with stopping time $i_{\ell+1}$, we see that 
\[\frac{\of{\sum_{i=i_{\ell}+1}^{i_{\ell+1}}X(i)} + 6Ln^{1/5}\log^{-1}n}{\sqrt{C_\ell n^{2/5}\log^{-2} n}} \stackrel{d}{\rightarrow} \scr{N}(0,1).\]
So there exists some constant $p_0 > 0$ such that for each $0\le \ell < d$ (and $n$ sufficiently large),
\[\Pr{\frac{\of{\sum_{i=i_{\ell}+1}^{i_{\ell+1}}X(i)} + 6Ln^{1/5}\log^{-1}n}{\sqrt{C_\ell n^{2/5}\log^{-2} n}} \ge \frac{6L+1}{\sqrt{c}}} \ge p_0\]
So we get that 
\begin{align*}
\Pr{\forall\, 0< \ell \le d,\, \zeta(i_\ell) \le n^{1/5}\log^{-1} n} &\le\Pr{\forall\, 0\le \ell < d,\,\sum_{i=i_\ell+1}^{i_{\ell+1}}X(i) \le n^{1/5}\log^{-1}n}\\
 &\le \of{1-p_0}^{\log n}\\
 &=o(1).
\end{align*}
So we know that w.h.p. there is a point $i_b$ where $\z(i_b) > n^{1/5}\log^{-1}n$. We would like to show that after $n^{3/5}\log^{-3}n$ steps, $\z$ has 
not decreased below $\frac{1}{2}n^{1/5}\log^{-1}n$. To prove this, we consider the martingale \[S_b(k) = n^{1/5}\log^{-1}n + \sum_{i=i_b}^{k}\of{X(i) - E[X(i) | 
\scr{F}_{i-1}]}.\]
Let $i_c = i_b + n^{3/5}\log^{-3}n$.
Then \[\sum_{i=i_b+1}^{i_c}Var[X(i)|\scr{F}_{i-1}] = \Theta\of{n^{2/5}\log^{-3}n}\]
By applying Lemma \ref{lem:Freedman} to this martingale, we have that after $n^{3/5}\log^{-3}n$ steps, 
\begin{align*}
\Pr{\exists i\,:\,i_b \le i\le i_c,\, \z(i) \le \frac{1}{2}n^{1/5}\log^{-1}n}&\le \Pr{\exists i\le i_c \,:\, S_b(i) \le \frac{1}{2}n^{1/5}\log^{-1}n}\\
	&\le \exp\of{-\Omega\of{\frac{n^{2/5}\log^{-2}n}{n^{2/5}\log^{-3}n\of{1+n^{-1/5}\log^{-2}n}}}}\\
	&\le o(1).
\end{align*}
So we know that whp, $\z(i) \ge \frac{1}{2}n^{1/5}\log^{-1}n$ for $i_b \le i \le i_c$. In this time, the algorithm is likely to delete an edge adjacent 
to a $\z$ vertex. Formally, we have that there exists some $q_0$ such that 
for all $i_b \le i \le i_c$, \[p_z(i)p_\z(i) \ge q_0 = \Omega\of{n^{-2/5}\log^{-1}n}\]
so that if $W$ is a random variable representing the number of $i$ between $i_b$ and $i_c$ when $\d(i) = \z$, then $W$ stochastically dominates 
$\Bin(n^{3/5}\log^{-3}n, q_0)$.
\[E[\Bin(n^{3/5}\log^{-3}n, q_0)] = \Omega\of{n^{1/5}\log^{-4}n},\]
so an application of the Chernoff bound tells us that, w.h.p., $W=\Omega\of{n^{1/5}\log^{-4}n}.$
\end{proof}

\end{section}

\end{document}